\newcommand{\RR}{\mathbb{R}}
\newcommand{\ZZ}{\mathbb{Z}}
\newcommand{\QQ}{\mathbb{Q}}
\newcommand{\OO}{\mathcal{O}}
\theoremstyle{theorem}
\newtheorem{theorem}{Theorem}[section]
\newtheorem{corollary}{Corollary}[section]
\newtheorem{lemma}{Lemma}[section]
\begin{document}
\title{Orders of Quaternion Algebras with Involution}

\author{Arseniy Sheydvasser}

\maketitle

\begin{abstract}
We introduce the notion of maximal orders over quaternion algebras with orthogonal involution and give a classification over local and global fields. Over local fields, we show that there is a correspondence between maximal and/or modular lattices and orders closed under the involution.
\end{abstract}

\section{Introduction:}

The study of maximal orders of quaternion algebras has a rich history, going back to the work of Hurwitz in the 1800s. Finding such orders is essentially a solved problem. Indeed, an explicit algorithm is given in \cite{IvanyosRonyai} that constructs maximal orders in a semisimple algebra over an algebraic number field in polynomial time, with a simpler algorithm given in \cite{VoightArticle} for the special case of quaternion algebras. It is a straight-forward exercise to check that any order of a quaternion algebra is closed under the standard involution, i.e. quaternion conjugation. However, given an arbitrary involution $\ddagger$ on a quaternion algebra $H$ and an order $\OO \subset H$, it need not be the case that $\OO = \OO^\ddagger$. In the special case where $\OO = \OO^\ddagger$, we shall call $\OO$ a $\ddagger$-\emph{order}. In this paper, we characterize $\ddagger$-orders over local and global fields---in the local case, we show that either the quaternion algebra is a division algebra or there is a correspondence between maximal or modular lattices and $\ddagger$-orders (see Theorems \ref{RamifiedCase}, \ref{NormalCount}, and \ref{StrangeCount}).

The interest in this subject comes from the problem of constructing certain arithmetic sphere packings in $\RR^3$. In $\RR^2$, one can construct \cite{Stange1} \cite{Stange2} circle packings by considering the action of a Bianchi group $SL(2,\OO_K)$ on the real line, where $\OO_K$ is the ring of integers of some imaginary quadratic field $K$. This same method can be used to produce interesting sphere packings in $\RR^3$, but requires replacing the Bianchi group $SL(2,\OO)$ with an appropriate analog $SL^\ddagger(2,\OO)$, where $\OO$ is a maximal $\ddagger$-order (see \cite{Self}).

For this reason, we shall be primarily interested in maximal $\ddagger$-orders---that is, $\ddagger$-orders not properly contained inside any other $\ddagger$-order. Since any order is contained inside a maximal order, it is easy to see that any maximal $\ddagger$-order must be of the form $\OO \cap \OO^\ddagger$, where $\OO$ is maximal. Therefore, all maximal $\ddagger$-orders are Eichler orders---that is, the intersection of two (not necessarily distinct) maximal orders. However, it is not the case that all Eichler orders of the form $\OO \cap \OO^\ddagger$ are maximal.

As a first example, consider the two maximal orders

	\begin{align*}
	\OO_1 &= \ZZ \oplus \ZZ i \oplus \ZZ j \oplus \ZZ \frac{5 + 5i + 3j - ij}{10} \subset \left(\frac{-1,-5}{\QQ}\right)\\
	\OO_2 &= \ZZ \oplus 9\ZZ i \oplus \ZZ j \oplus \ZZ\frac{90 + 385i - 63j + ij}{90} \subset \left(\frac{-1,-5}{\QQ}\right).
	\end{align*}
	
\noindent Define an involution on $\left(\frac{-1,-5}{\QQ}\right)$ by

	\begin{align*}
	(w + xi + yj + zij)^\ddagger &= w + xi + yj - zij.
	\end{align*}
	
\noindent Then it is an easy computation that

	\begin{align*}
	\OO_1 \cap \OO_1^\ddagger &= \ZZ \oplus \ZZ i \oplus \ZZ j \oplus \ZZ\frac{1 + i + j + ij}{2} \\
	\OO_2 \cap \OO_2^\ddagger	&= \ZZ \oplus 9\ZZ i \oplus j \oplus 9\ZZ \frac{1 + i + j + ij}{2},
	\end{align*}
	
\noindent which shows that $\OO_2 \cap \OO_2^\ddagger$ is not a maximal $\ddagger$-order. What is less evident is that $\OO_1 \cap \OO_1^\ddagger$ is a maximal $\ddagger$-order. As we shall describe presently, for an Eichler order $\OO \cap \OO^\ddagger$ to be maximal, there is a necessary and sufficient restriction on the discriminant of the order.

Let $F$ be a local or global field of characteristic not 2, and $\mathfrak{o}$ its ring of integers. If $H$ is quaternion algebra over $F$ with an orthogonal involution $\ddagger$, we denote by $disc(\ddagger) \in F^\times/\left(F^\times\right)^2$ the discriminant of $\ddagger$ (see section \ref{Preliminaries} for definitions). Unlike the discriminant $disc(H)$ of the quaternion algebra, this is not an ideal of $\mathfrak{o}$, but we can associate an ideal to $disc(\ddagger)$ by defining a map

	\begin{align*}
	\iota: F^\times/\left(F^\times\right)^2 &\rightarrow \left\{\text{square-free ideals of } \mathfrak{o}\right\} \\
	[\lambda] &\mapsto \bigcup_{\lambda \in [\lambda] \cap \mathfrak{o}} \lambda \mathfrak{o}.
	\end{align*}
	
With these conventions, our main result is the following simple statement.

\begin{theorem}\label{MainTheorem}
Given a quaternion algebra $H$ over a local or global field $F$, the maximal $\ddagger$-orders of $H$ correspond to Eichler orders of the form $\OO \cap \OO^\ddagger$ with discriminant

	\begin{align*}
	disc(H) \cap \iota(disc(\ddagger)).
	\end{align*}
\end{theorem}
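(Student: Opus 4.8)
The plan is to reduce the global statement to a purely local one and then verify the local claim by the case analysis that Theorems \ref{RamifiedCase}, \ref{NormalCount}, and \ref{StrangeCount} already provide. First I would recall that orders, Eichler orders, and the property of being a $\ddagger$-order are all local notions: an order $\OO$ of $H$ is determined by its completions $\OO_\mathfrak{p} = \OO \otimes_\mathfrak{o} \mathfrak{o}_\mathfrak{p}$, the discriminant is the product $\prod_\mathfrak{p} disc(\OO_\mathfrak{p})$ over finite primes, and $\OO = \OO^\ddagger$ if and only if $\OO_\mathfrak{p} = \OO_\mathfrak{p}^\ddagger$ for every $\mathfrak{p}$, since $\ddagger$ commutes with completion. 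Likewise an intersection $\OO \cap \OO'$ localizes as $\OO_\mathfrak{p} \cap \OO'_\mathfrak{p}$, and being maximal among $\ddagger$-orders is equivalent to being locally maximal among $\ddagger$-orders at every prime (a $\ddagger$-suborder that is everywhere locally contained in another yields a global containment, and conversely a strictly larger $\ddagger$-order must be strictly larger at some prime). So $\OO \cap \OO^\ddagger$ is a maximal $\ddagger$-order globally precisely when each $(\OO \cap \OO^\ddagger)_\mathfrak{p}$ is a maximal $\ddagger$-order of $H_\mathfrak{p}$, with the induced orthogonal involution.

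The second step is to match the discriminant condition prime-by-prime. The key observation is that $disc(\ddagger)$, which a priori is only a class in $F^\times/(F^\times)^2$, has a well-defined image in $F_\mathfrak{p}^\times/(F_\mathfrak{p}^\times)^2$ for each $\mathfrak{p}$, and under the map $\iota$ the ideal $\iota(disc(\ddagger))$ is exactly the product over $\mathfrak{p}$ of the local contributions: $\mathfrak{p}$ divides $\iota(disc(\ddagger))$ if and only if the local discriminant class is a nonsquare unit times an odd power of a uniformizer — equivalently, the local involution is "ramified" in the sense used in Theorem \ref{StrangeCount}. I would make this precise, so that the global ideal $disc(H) \cap \iota(disc(\ddagger))$ is seen to have $\mathfrak{p}$-valuation $1$ exactly when either $H_\mathfrak{p}$ is division (i.e. $\mathfrak{p} \mid disc(H)$) or the local discriminant of $\ddagger$ at $\mathfrak{p}$ is the nontrivial class, and valuation $0$ otherwise.

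The third step is the genuinely local claim: for each prime $\mathfrak{p}$, the maximal $\ddagger$-orders of $H_\mathfrak{p}$ are precisely the Eichler orders $\OO_\mathfrak{p} \cap \OO_\mathfrak{p}^\ddagger$ whose discriminant equals this prescribed power of $\mathfrak{p}$. This is where Theorems \ref{RamifiedCase}, \ref{NormalCount}, and \ref{StrangeCount} do the work: in the ramified (division algebra) case there is a unique maximal order, it is $\ddagger$-stable, and one checks its discriminant is $\mathfrak{p}$ (the statement of Theorem \ref{RamifiedCase}); in the split case one splits into the subcase where $disc(\ddagger)$ is trivial locally — handled by Theorem \ref{NormalCount}, giving a correspondence with maximal lattices and discriminant $\mathfrak{o}_\mathfrak{p}$ — and the subcase where it is nontrivial, handled by Theorem \ref{StrangeCount}, giving a correspondence with modular lattices and discriminant $\mathfrak{p}$. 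In each subcase I would extract from the cited theorem the statement that $\OO_\mathfrak{p} \cap \OO_\mathfrak{p}^\ddagger$ is maximal among $\ddagger$-orders if and only if its discriminant is the asserted value, which is exactly what is needed.

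I expect the main obstacle to be the bookkeeping in the second and third steps — specifically, confirming that the notion of "ramified involution" appearing in Theorem \ref{StrangeCount} corresponds exactly to $\mathfrak{p} \in \iota(disc(\ddagger))$, and that the lattice-theoretic counting in Theorems \ref{NormalCount} and \ref{StrangeCount} really does isolate the maximal $\ddagger$-orders by their discriminant (rather than merely enumerating $\ddagger$-orders of a fixed discriminant). The globalization in the first step is routine once the local-global dictionary for orders is in place, but care is needed that "maximal $\ddagger$-order" is not vacuously larger than "everywhere locally maximal $\ddagger$-order"; this follows because any $\ddagger$-order strictly containing $\OO \cap \OO^\ddagger$ must do so at some finite prime, and at the other primes the completions already agree, so one can glue the larger local $\ddagger$-order back to a global one.
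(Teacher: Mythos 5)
Your overall strategy---localize, then invoke the local classification prime by prime---is exactly the paper's (its Localization section is your step one, and the remark immediately following the theorem statement does the rest). But two of your concrete claims in steps two and three are wrong, and they sit precisely at the point you flagged as ``bookkeeping.'' First, the dichotomy in the split case is \emph{not} ``local discriminant class trivial versus nontrivial.'' The paper's dichotomy between the maximal-lattice correspondence (Theorem \ref{NormalCount}) and the modular-lattice correspondence (Theorem \ref{StrangeCount}) is governed by the quadratic defect $\mathfrak{d}(-\lambda)$ for $\lambda \in disc(\ddagger)$: Theorem \ref{NormalCount} applies when $\mathfrak{p}$ is non-dyadic or $\mathfrak{d}(-\lambda) = \mathfrak{p}$, and Theorem \ref{StrangeCount} applies only when $\mathfrak{p}$ is dyadic and $\mathfrak{d}(-\lambda) \neq \mathfrak{p}$; at a dyadic place a nonsquare unit $\lambda$ can land in either regime, so your dichotomy does not match. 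Second, and more seriously, you assign the discriminants backwards: in the Theorem \ref{StrangeCount} case one necessarily has $\iota(disc(\ddagger)) = \mathfrak{o}$, and the maximal $\ddagger$-orders there are honest maximal orders $End(\Lambda)$ of discriminant $\mathfrak{o}$, not $\mathfrak{p}$. The local discriminant $\mathfrak{p}$ occurs in the split case exactly when $\iota(disc(\ddagger)) = \mathfrak{p}$, which happens only within the Theorem \ref{NormalCount} regime. Note also that the discriminant computations you need actually live in Theorems \ref{NonDyadic}, \ref{DyadicDMaximal}, and \ref{DyadicDNotMaximal}, not in the counting theorems you cite.

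There is a second, smaller gap: the local theorems assert only one direction (every maximal $\ddagger$-order has discriminant $disc(H) \cap \iota(disc(\ddagger))$), whereas the statement requires the biconditional. The missing converse is the paper's one-line observation right after the theorem: any Eichler order of the form $\OO \cap \OO^\ddagger$ is contained in some maximal $\ddagger$-order, and equality of discriminants upgrades that containment to an equality. You should supply this argument explicitly rather than hoping to ``extract'' an if-and-only-if from theorems that prove only the forward implication.
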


Note that it is sufficient to prove that all maximal $\ddagger$-orders have discriminants of the desired form---if $\OO \cap \OO^\ddagger$ is an Eichler order, it must be contained inside a maximal $\ddagger$-order, and if their discriminants match, they must be the same.

In the process of proving Theorem \ref{MainTheorem}, we establish a far more explicit classification of maximal $\ddagger$-orders over local fields. In particular, we show that if $H = End(V)$, where $V$ is a 2-dimensional quadratic space over a local field, then there is a correspondence between either maximal or modular lattices in $V$ and maximal $\ddagger$-orders of $H$. 

\subsection*{Acknowledgements} The author would like to thank Asher Auel, Alex Kontorovich and John Voight for their helpful suggestions and insights.

\section{Preliminaries:}\label{Preliminaries}

We recall some basic facts and conventions about involutions over central simple algebras. Let $H$ be a central simple algebra of dimension $n^2$ over a field $F$ (with $char(F) \neq 2$). Recall that an involution (of the first type) is an $F$-linear map $\ddagger$ on $H$ such that

\hskip0.1in

	\begin{enumerate}
		\item $(xy)^\ddagger = y^\ddagger x^\ddagger \ \left(\forall x,y \in H\right)$, and
		\item $\ddagger^2 = id$.
	\end{enumerate}
	
\hskip0.1in
	
A homomorphism of two algebras with involution $(H_1, \ddagger_1)$ and $(H_2, \ddagger_2)$ is an $F$-algebra homomorphism $\phi: H_1 \rightarrow H_2$ such that

	\begin{align*}
	\phi(\sigma^{\ddagger_1}) = \phi(\sigma)^{\ddagger_2} \ \ \ (\forall \sigma \in H_1).
	\end{align*}
	
\noindent This is the correct notion of homomorphism for our purposes, since isomorphisms of algebras with involutions

\hskip0.1in

	\begin{enumerate}
		\item send $\ddagger$-orders to $\ddagger$-orders,
		\item send Eichler orders of the form $\OO \cap \OO^\ddagger$ to Eichler orders of the same form, and
		\item preserve the discriminant.
	\end{enumerate}
	
\hskip0.1in
	
\noindent Note that $H$ decomposes as a vector space as $H^+ \oplus H^-$, where

	\begin{align*}
	H^+ &= \left\{x \in H \middle| x^\ddagger = x\right\} \\
	H^- &= \left\{x \in H \middle| x^\ddagger = -x\right\}.
	\end{align*}

\noindent It is well known that involutions on $H$ split into two basic types:

\hskip0.1in

	\begin{enumerate}
		\item \emph{symplectic involutions}, for which $\dim H^+ = \frac{n(n-1)}{2}$ and $\dim H^- = \frac{n(n+1)}{2}$, and
		\item \emph{orthogonal involutions}, for which $\dim H^+ = \frac{n(n+1)}{2}$ and $\dim H^- = \frac{n(n-1)}{2}$.
	\end{enumerate}
	
\hskip0.1in
	
\noindent In the particular case where $H$ is a quaternion algebra, the only symplectic involution is the standard involution of quaternion conjugation---all other involutions are orthogonal.

For a given orthogonal involution $\ddagger$, we define the group

	\begin{align*}
	GO(H, \ddagger) &= \left\{x \in H^\times \middle| x^\ddagger x \in F \right\} \\
	&= \left\{x \in H^\times \middle| x^\ddagger = \pm \overline{x} \right\}.
	\end{align*}
	
\noindent There is an exact sequence

\begin{equation*}
\begin{tikzcd}[row sep=tiny]
0 \arrow[r] & \{\pm 1\} \arrow[r] & GO(H,\ddagger) \arrow[r] & \left\{\substack{\text{automorphisms} \\ \text{of } (H, \ddagger)}\right\} \arrow[r] & 0 \\
& & x \arrow[r, mapsto] & \left(\sigma \mapsto x\sigma x^{-1}\right) &
\end{tikzcd},
\label{ExactSequence}
\end{equation*}

\noindent and so we simply identify $PGO(H, \ddagger) = GO(H, \ddagger)/\{\pm 1\}$ with the automorphism group of $(H,\ddagger)$.

Finally, the discriminant of an orthogonal involution is defined as

	\begin{align*}
	disc(\ddagger) = nrd(h) \cdot \left(F^\times\right)^2 \in F^\times/\left(F^\times\right)^2,
	\end{align*}
	
\noindent where $h$ is any element of $H^-$, and $nrd(h) = \overline{h}h$ is the standard norm map. Orthogonal involutions are classified by the discriminant---that is, given a quaternion algebra $H$ and two orthogonal involutions $\ddagger_1, \ddagger_2$ on $H$ such that $disc(\ddagger_1) = disc(\ddagger_2)$, there is an isomorphism $\phi: (H, \ddagger_1) \rightarrow (H, \ddagger_2)$.

\section{Localization:}

We note that to prove Theorem \ref{MainTheorem}, it is in fact sufficient to prove it for local fields. To see this, $F$ be a global field, let $\Omega$ denote the set of places of $F$, and $\Omega_f, \Omega_\infty$ denote the finite and infinite places of $F$ respectively. Given a quaternion algebra with involution $(H, \ddagger)$, we can consider the localizations

	\begin{align*}
	H_\nu &= H \otimes_F F_\nu \\
	\left(h \otimes t\right)^{\ddagger_\nu} &= h^\ddagger \otimes t
	\end{align*}

\noindent for any $\nu \in \Omega$. We can also define

	\begin{align*}
	\OO_\mathfrak{p} &= \OO \otimes_\mathfrak{o} \mathfrak{o}_\mathfrak{p}
	\end{align*}
	
\noindent for any $\mathfrak{p} \in \Omega_f$. It is easy to check that localization sends $\ddagger$-orders to $\ddagger_{\mathfrak{p}}$-orders, sends Eichler orders to Eichler orders, preserves maximality, and

	\begin{align*}
	disc(H) &= \prod_{\nu \in \Omega} disc(H_\nu) \\
	\iota(disc(\ddagger)) &= \prod_{\nu \in \Omega} \iota(disc(\ddagger_{\nu})).
	\end{align*}
	
\noindent Therefore, if Theorem \ref{MainTheorem} holds for all of the localizations $H_\mathfrak{p}$, it must hold for $H$ itself. Thus, henceforth we shall assume that $F$ is a local field over a place $\mathfrak{p}$ with uniformizer $\pi$.

For any $x \in F$, we shall denote by $ord_\mathfrak{p}(x)$ the smallest integer $n$ such that $x \in \mathfrak{p}^n$.

\section{The Division Algebra Case:}

For any quaternion algebra over a field $F$, it is either a division algebra, or is isomorphic to $Mat(2,F)$ (in which case it is called split). We start by considering the division algebra case, which turns out to be especially simple.

	\begin{theorem}\label{RamifiedCase}
	Let $F$ be a local field, and suppose $H$ is a division algebra. Then there is a unique maximal $\ddagger$-order, given by
	
		\begin{align*}
		\OO &= \left\{h \in H \middle| nrd(h) \in \mathfrak{o}\right\}.
		\end{align*}
		
	\noindent The order $\OO$ is also maximal and furthermore,
	
		\begin{align*}
		disc(\OO) &= \mathfrak{p} = disc(H) \cap \iota(disc(\ddagger)).
		\end{align*}
	\end{theorem}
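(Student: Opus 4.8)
The plan is to exploit the well-known structure of a quaternion division algebra over a local field: it has a unique maximal order, namely the valuation ring $\OO = \{h \in H \mid nrd(h) \in \mathfrak{o}\}$, which is also the unique maximal $\ZZ$-order, and its unique two-sided maximal ideal $\mathfrak{P}$ satisfies $\mathfrak{P}^2 = \pi\OO$, with $disc(\OO) = \mathfrak{p}$ and $disc(H) = \mathfrak{p}$. I would begin by recalling these facts (citing the standard references, e.g. \cite{VoightArticle}), so that the only thing left to prove is that $\OO$ is closed under $\ddagger$ and that it is the unique maximal $\ddagger$-order.

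Next I would show $\OO^\ddagger = \OO$. Since $\ddagger$ is an anti-automorphism of $H$, the set $\OO^\ddagger$ is again a ring, a finitely generated $\mathfrak{o}$-module, and spans $H$ over $F$; hence $\OO^\ddagger$ is an order, and being the image of a maximal order under a bijective ring map it is again maximal. By uniqueness of the maximal order, $\OO^\ddagger = \OO$. (Alternatively, and more concretely, one checks directly that $nrd(h^\ddagger) = nrd(h)$: writing $h^\ddagger = \pm\overline{uhu^{-1}}$ for a suitable implementation of the orthogonal involution as conjugation composed with the standard involution, or simply using that any involution of the first type preserves the reduced norm up to the fixed class — in fact $nrd \circ \ddagger = nrd$ on a quaternion algebra since $nrd(x) = x^\ddagger{}^{\dagger\dagger}\cdots$; cleanest is: $nrd(h)$ is characterized by $h\overline h = nrd(h)$, and applying $\ddagger$ gives $\overline h{}^\ddagger h^\ddagger = nrd(h)$, while $\overline{h^\ddagger} h^\ddagger = nrd(h^\ddagger)$; since on a quaternion algebra $\overline{x} = trd(x) - x$ and $\ddagger$ fixes $F$ and preserves $trd$, we get $\overline{h^\ddagger} = \overline h{}^\ddagger$, so $nrd(h^\ddagger) = nrd(h)$.) Either way, $h \in \OO \iff nrd(h) \in \mathfrak{o} \iff nrd(h^\ddagger) \in \mathfrak{o} \iff h^\ddagger \in \OO$, so $\OO$ is a $\ddagger$-order.

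Then I would argue uniqueness and maximality as a $\ddagger$-order. Any $\ddagger$-order is in particular an order, hence contained in the maximal order $\OO$; since $\OO$ is itself a $\ddagger$-order, it is the unique maximal $\ddagger$-order and every $\ddagger$-order is contained in it. Finally I would verify the discriminant identity. We already have $disc(\OO) = disc(H) = \mathfrak{p}$ from the standard theory. It remains to check $\iota(disc(\ddagger)) \supseteq \mathfrak{p}$, i.e. that $\iota(disc(\ddagger))$ is either $\mathfrak{o}$ or $\mathfrak{p}$, so that the intersection $disc(H) \cap \iota(disc(\ddagger)) = \mathfrak{p}$ regardless: this is immediate because $\iota$ takes values in square-free ideals of $\mathfrak{o}$, and over a local field the only square-free ideals are $\mathfrak{o}$ and $\mathfrak{p}$, so $disc(H) \cap \iota(disc(\ddagger)) = \mathfrak{p} \cap \iota(disc(\ddagger)) \in \{\mathfrak{p}\}$ — wait, this needs $\mathfrak p \subseteq$ has to be an intersection of ideals which is the lcm, not the product; since the square-free ideals of $\mathfrak o$ are totally ordered by $\mathfrak o \supsetneq \mathfrak p$, the "intersection" $disc(H) \cap \iota(disc(\ddagger))$ in the sense used in Theorem \ref{MainTheorem} (the lcm, i.e. the smaller ideal) equals $\mathfrak p$ whenever one of the two factors is $\mathfrak p$, which holds here since $disc(H) = \mathfrak p$. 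So $disc(\OO) = \mathfrak p = disc(H) \cap \iota(disc(\ddagger))$, completing the proof.

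\textbf{Main obstacle.} The only genuinely delicate point is pinning down exactly what ``$disc(H) \cap \iota(disc(\ddagger))$'' means as an operation on ideals (it is the least common multiple / smaller ideal, not the product), and confirming that in the local division case this forces the answer $\mathfrak{p}$ no matter what $disc(\ddagger)$ is; everything else is a direct appeal to the uniqueness of the maximal order in a local quaternion division algebra together with the observation that $\ddagger$ preserves the reduced norm.
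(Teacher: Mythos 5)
Your proposal is correct and follows essentially the same route as the paper: appeal to the uniqueness of the maximal order in a local quaternion division algebra, and observe that $\iota(disc(\ddagger))$, being a square-free ideal of a local ring, is either $\mathfrak{o}$ or $\mathfrak{p}$, so the intersection with $disc(H) = \mathfrak{p}$ is $\mathfrak{p}$ in either case. The only difference is that you spell out why $\OO^\ddagger = \OO$ (via $\OO^\ddagger$ being itself a maximal order, or via $nrd \circ \ddagger = nrd$), a step the paper leaves implicit in the phrase ``consequently, it must be the unique maximal $\ddagger$-order.''
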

	
	\begin{proof}
	It is well known that $\OO$ is the unique maximal order of $H$. Consequently, it must be the unique maximal $\ddagger$-order. It is also easy to check that
	
	\begin{align*}
	disc(\OO) &= \mathfrak{p} = disc(H).
	\end{align*}
	
	Since $\iota(disc(\ddagger))$ is a square-free ideal in a local field, it is either $\mathfrak{o}$ or $\mathfrak{p}$. In either case, the theorem holds.
	\end{proof}
	
\section{Split Quaternion Algebras:}

It remains to prove Theorem \ref{MainTheorem} for the case where $H$ is split. We have that $H \cong End(V)$, where $V$ is any 2-dimensional vector space over $F$. We start with a basic observation that we shall use for easy computations of the discriminant.

\begin{lemma}\label{ExplicitAlgebra}
Let $H$ be a split quaternion algebra over a local field $F$ with involution $\ddagger$. Choose $\lambda \in disc(\ddagger)$ such that $\lambda$ is a generator of $\iota(disc(\ddagger))$. Then $(H, \ddagger)$ is isomorphic to $Mat(2,F)$ with the involution $\ddagger_\lambda$ given by

	\begin{align*}
	\begin{pmatrix} a & b \\ c & d \end{pmatrix}^{\ddagger_\lambda} = \begin{pmatrix} a & c/\lambda \\ b \lambda & d \end{pmatrix}.
	\end{align*}
\end{lemma}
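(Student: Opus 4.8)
The plan is to exhibit an explicit orthogonal involution of the form $\ddagger_\lambda$ on $\mathrm{Mat}(2,F)$, verify that it genuinely is an orthogonal involution, compute its discriminant, and then invoke the classification of orthogonal involutions by discriminant (stated at the end of Section~\ref{Preliminaries}) to conclude that $(H,\ddagger)$ is isomorphic to $(\mathrm{Mat}(2,F),\ddagger_\lambda)$. First I would check that $\ddagger_\lambda$ is well-defined and is an involution of the first type: $F$-linearity is clear from the formula, $\ddagger_\lambda^2 = \mathrm{id}$ is a one-line check, and the anti-multiplicativity $(XY)^{\ddagger_\lambda} = Y^{\ddagger_\lambda} X^{\ddagger_\lambda}$ can either be verified by direct computation on $2\times 2$ matrices or, more conceptually, by noting that $X^{\ddagger_\lambda} = D^{-1} X^{t} D$ where $D = \mathrm{diag}(1,\lambda)$ (or the antidiagonal matrix $\mathrm{diag}(\lambda,1)$ read appropriately), so that $\ddagger_\lambda$ is the adjoint involution for the diagonal bilinear form with Gram matrix $D$, and conjugate-transpose-type involutions are automatically anti-multiplicative.

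Next I would check that $\ddagger_\lambda$ is orthogonal rather than symplectic. Since the only symplectic involution on a quaternion algebra is quaternion conjugation (stated in the Preliminaries), it suffices to observe that $\ddagger_\lambda$ is not the standard involution — e.g.\ $I^{\ddagger_\lambda} = I \neq -I$ would be the wrong test, so instead I would compute $\dim H^+$ directly: the fixed space consists of matrices with $b\lambda = c/\lambda$, i.e.\ $c = b\lambda^2$, which together with arbitrary $a,d$ gives a 3-dimensional space, matching $\frac{n(n+1)}{2} = 3$ for $n=2$. Then I compute the discriminant: pick any $h \in H^-$, say the matrix with $a = d = 0$, $b = 1$, $c = -\lambda^2$ (so that $h^{\ddagger_\lambda} = -h$), and compute $nrd(h) = \det h = \lambda^2 \in (F^\times)^2$ — that's wrong, so I would instead use $h$ with $a=d=0$, $b=1$, $c=\lambda^2$? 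Let me be careful: the antisymmetric condition is $c/\lambda = -b\lambda$, i.e.\ $c = -b\lambda^2$, giving $\det h = ad - bc = b^2\lambda^2$; any such $h$ has square norm, which would force $disc(\ddagger_\lambda)$ trivial. The resolution is that $H^-$ for an orthogonal involution is not spanned by a single such matrix — it is 1-dimensional only when... no, for $n=2$ orthogonal, $\dim H^- = 1$. So $disc(\ddagger_\lambda) = nrd(h)(F^\times)^2$ for the unique-up-to-scalar $h$, and with $h = \begin{pmatrix} 0 & 1 \\ -\lambda^2 & 0\end{pmatrix}$ we get $nrd(h) = \lambda^2$; this suggests the correct normalization uses the Gram matrix in a form where the off-diagonal entry carries $\lambda$, and the honest computation gives $disc(\ddagger_\lambda) = \lambda \cdot (F^\times)^2$. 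I would pin this down by computing with $h = \begin{pmatrix} 1 & 0 \\ 0 & -1 \end{pmatrix}$ adjusted into $H^-$, or simply cite that the adjoint involution of a binary form $\langle 1, \lambda\rangle$ has discriminant $\lambda$ up to squares.

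Finally, having shown $disc(\ddagger_\lambda) = [\lambda] = disc(\ddagger)$ in $F^\times/(F^\times)^2$, I apply the classification theorem from Section~\ref{Preliminaries}: two orthogonal involutions on the same quaternion algebra with equal discriminant are conjugate by an isomorphism of algebras-with-involution. Since $H$ is split, $H \cong \mathrm{Mat}(2,F)$ as $F$-algebras, transport $\ddagger$ across this isomorphism, and then the classification gives the desired isomorphism $(H,\ddagger) \cong (\mathrm{Mat}(2,F),\ddagger_\lambda)$. The main obstacle is purely bookkeeping: getting the discriminant computation right so that the exponent of $\lambda$ comes out to be $1$ rather than $0$ or $2$ modulo squares — this hinges on correctly identifying the one-dimensional space $H^-$ and the correct representative $h$, and on the fact that $\lambda$ was chosen to be a generator of $\iota(disc(\ddagger))$, hence a genuine (minimal-valuation) representative of the class $disc(\ddagger)$ rather than an arbitrary one. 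Everything else is routine $2\times 2$ matrix algebra.
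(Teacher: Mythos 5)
Your overall strategy is exactly the paper's: realize $\ddagger_\lambda$ as the adjoint involution of a diagonal bilinear form, check that it is orthogonal, compute $disc(\ddagger_\lambda)=\lambda\,(F^\times)^2$, and invoke the classification of orthogonal involutions by their discriminant. The problem is that the one computation carrying all the content --- identifying $H^-$ and evaluating $nrd$ on it --- is done incorrectly, and you end up rescuing the conclusion by an external citation rather than by your own calculation. The error is an entry-matching slip: the condition $h^{\ddagger_\lambda}=-h$ requires the $(1,2)$-entry of $h^{\ddagger_\lambda}$, namely $c/\lambda$, to equal the $(1,2)$-entry of $-h$, namely $-b$. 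This gives $c=-b\lambda$, not $c=-b\lambda^2$; your equation $c/\lambda=-b\lambda$ compares $c/\lambda$ with the negative of the $(2,1)$-entry of $h^{\ddagger_\lambda}$, which is not the relevant comparison. (The same slip occurs in your description of $H^+$, where the correct condition is $c=b\lambda$, though the count $\dim H^+=3$ happens to survive.) With the correct condition, $H^-$ is spanned by $h=\left(\begin{smallmatrix}0&1\\-\lambda&0\end{smallmatrix}\right)$, and $nrd(h)=\det(h)=\lambda$, so $disc(\ddagger_\lambda)=\lambda\,(F^\times)^2$ with no tension to resolve --- this is precisely the element and the one-line computation the paper uses. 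Your fallback, citing that the adjoint involution of $\langle 1,\lambda\rangle$ has discriminant $\lambda$ modulo squares, is a true statement and would also close the argument, but as written your proof never actually establishes the discriminant; it derives a contradiction ($disc$ trivial) and then asserts the intended answer.

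Two smaller points. First, the correct Gram matrix for $\ddagger_\lambda$ as written is $\mathrm{diag}(\lambda,1)$, not $\mathrm{diag}(1,\lambda)$ (the latter gives $\ddagger_{1/\lambda}$); both lie in the same square class, so this does not affect the discriminant, but it is the source of some of your sign-chasing. Second, the discriminant computation does not hinge on $\lambda$ being a generator of $\iota(disc(\ddagger))$ --- any representative of the square class $disc(\ddagger)$ yields $disc(\ddagger_\lambda)=disc(\ddagger)$; that normalization of $\lambda$ only matters for the later lemmas where integrality and valuation of $\lambda$ enter.
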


\begin{proof}
Since $H \cong Mat(2,F) = End(F^2)$ as algebras, and involutions on algebras are classified by their discriminant, it shall suffice to prove that $disc(\ddagger_\lambda) = disc(\ddagger)$. But

	\begin{align*}
	\begin{pmatrix} 0 & 1 \\ -\lambda & 0 \end{pmatrix}^{\ddagger_\lambda} &= -\begin{pmatrix} 0 & 1 \\ -\lambda & 0 \end{pmatrix},
	\end{align*}
	
\noindent and the norm is
	
	\begin{align*}
	\det \begin{pmatrix} 0 & 1 \\ -\lambda & 0 \end{pmatrix} &= \lambda
	\end{align*}
	
\noindent and therefore

	\begin{align*}
	disc(\ddagger_\lambda) &= \lambda \left(F^\times\right)^2 = disc(\ddagger),
	\end{align*}
	
\noindent as claimed.
\end{proof}

With Lemma \ref{ExplicitAlgebra} in mind, we shall now rephrase the problem in terms of quadratic forms. Given any non-singular bilinear form $b$ on $V$, it is a simple computation that there is a unique involution $\ddagger_b$ of $End(V)$ defined by the property

	\begin{align*}
	b(v, \sigma w) &= b(\sigma^{\ddagger_b} v, w) \ \ \left(\forall v,w \in V, \sigma \in End(V)\right).
	\end{align*}
	
\noindent It is easily checked that scaling the bilinear form $b$ on $V$ does not change the involution $\ddagger_b$. However, it is a standard result (see \cite[p.1]{Involutions}) that $[b] \mapsto \ddagger_b$ defines a bijection between symmetric bilinear forms (defined up to scaling) and orthogonal involutions.

Thus, we henceforth fix a symmetric bilinear form $b_\ddagger$ on $V$ such that the corresponding involution is the orthogonal involution of $A$. We denote the associated quadratic form $q_\ddagger$. To prove Theorem \ref{MainTheorem}, we shall eventually need to consider the non-dyadic and dyadic places of $F$ separately. First, however, we give some results that hold generally.

\begin{lemma}\label{DescriptionOfIsomorphisms}
	\begin{align*}
	GO\left(End(V),\ddagger\right) = \bigcup_{l = 0}^1 \left\{g \in GL(V) \middle| q_\ddagger(gv) = (-1)^l \det(g) q_\ddagger(v) \ \forall v \in V\right\}.
	\end{align*}
\end{lemma}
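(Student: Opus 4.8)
The plan is to unwind the definition of $GO(\operatorname{End}(V), \ddagger)$ in terms of the bilinear form $b_\ddagger$ and then translate the condition $g^{\ddagger}g \in F$ into a statement about how $g$ scales the quadratic form $q_\ddagger$. First I would observe that for $g \in GL(V)$, the defining relation $b_\ddagger(v, \sigma w) = b_\ddagger(\sigma^{\ddagger} v, w)$ applied with $\sigma = g$ shows that $g^{\ddagger}$ is the adjoint of $g$ with respect to $b_\ddagger$; hence $g^{\ddagger} g = \mu \in F^\times$ is equivalent to $b_\ddagger(gv, gw) = \mu\, b_\ddagger(v, w)$ for all $v, w$, i.e. $g$ is a similitude of $b_\ddagger$ with multiplier $\mu$. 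Polarizing, this is the same as $q_\ddagger(gv) = \mu\, q_\ddagger(v)$ for all $v \in V$ (here we use $\operatorname{char}(F) \neq 2$ so the quadratic form and the symmetric bilinear form determine each other).

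The remaining point is to identify the multiplier $\mu$ with $\pm\det(g)$. For this I would pick a basis of $V$ in which $b_\ddagger$ has Gram matrix $B$, so the similitude condition reads $g^{t} B g = \mu B$. Taking determinants gives $\det(g)^2 \det(B) = \mu^2 \det(B)$, hence $\mu^2 = \det(g)^2$, so $\mu = \pm \det(g)$. Writing $\mu = (-1)^l \det(g)$ with $l \in \{0,1\}$ then yields exactly the two sets in the claimed union: $g \in GL(V)$ with $q_\ddagger(gv) = (-1)^l \det(g)\, q_\ddagger(v)$ for all $v$. Conversely, any such $g$ satisfies $g^{\ddagger} g = (-1)^l \det(g) \in F^\times$, so $g \in GO(\operatorname{End}(V), \ddagger)$; this gives the reverse inclusion and hence equality.

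The only genuinely delicate step is making sure the adjoint computation is done correctly — that $g^{\ddagger}$ really is the $b_\ddagger$-adjoint of $g$ and that $g^{\ddagger} g \in F$ (a scalar in $\operatorname{End}(V)$) is the right scalar. Concretely, from $b_\ddagger(v, gw) = b_\ddagger(g^{\ddagger}v, w)$ one gets $b_\ddagger(gv, gw) = b_\ddagger(g^{\ddagger} g v, w)$, and if $g^{\ddagger}g = \mu \cdot \mathrm{id}$ this is $\mu\, b_\ddagger(v,w)$; the converse uses non-degeneracy of $b_\ddagger$ to conclude $g^{\ddagger}g = \mu \cdot \mathrm{id}$ from $b_\ddagger(gv,gw) = \mu\, b_\ddagger(v,w)$. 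Everything else is the routine determinant bookkeeping above, and there is no serious obstacle once the dictionary between $\ddagger$ and $b_\ddagger$ is in place.
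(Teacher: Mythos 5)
Your proposal is correct and follows essentially the same route as the paper: both translate $g^\ddagger g \in F^\times$ into the similitude condition $q_\ddagger(gv) = \mu\, q_\ddagger(v)$ via the adjoint property of $\ddagger$, and identify $\mu = \pm\det(g)$ (the paper via $g^\ddagger = \pm\overline{g}$ and $\overline{g}g = \det(g)$, you via determinants of $g^t B g = \mu B$ using $\dim V = 2$). Your reverse inclusion, which recovers $g^\ddagger g = \mu\cdot\mathrm{id}$ directly from non-degeneracy of $b_\ddagger$, is in fact slightly more self-contained than the paper's appeal to the bijection between bilinear forms up to scaling and orthogonal involutions.
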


\begin{proof}
Let $g \in GO\left(End(V),\ddagger\right)$ and $v \in V$. Then

	\begin{align*}
	q_\ddagger(gv) &= b_\ddagger(gv, gv) \\
	&= b_\ddagger\left(g^\ddagger g v, v\right) \\
	&= g^\ddagger g b_\ddagger(v,v) \\
	&= \pm \det(g) q_\ddagger(v),
	\end{align*}
	
\noindent hence

	\begin{align*}
	GO\left(End(V),\ddagger\right) \subset \bigcup_{l = 0}^1 \left\{g \in GL(V) \middle| q_\ddagger(gv) = (-1)^l \det(g) q_\ddagger(v) \ \forall v \in V\right\}.
	\end{align*}
	
\noindent On the other hand, if $q_\ddagger(gv) = \pm \det(g) q_\ddagger(v)$ for all $v \in V$, then $g$ preserves the quadratic form $q_\ddagger$ up to scaling, hence it preserves the bilinear form $b_\ddagger$ up to scaling. Since there is a bijective correspondence between bilinear forms (up to scaling) and orthogonal involutions on $End(V)$, we conclude that $g$ in fact preserves $\ddagger$.
\end{proof}

Next, to easily describe Eichler orders $\OO \cap \OO^\ddagger$ in $End(V)$, we shall need to consider $\mathfrak{o}$-lattices in the quadratic space $V$. For a given lattice $\Lambda \subset V$, we denote its dual lattice by

	\begin{align*}
	\Lambda^\sharp = \left\{v \in V \middle| b_\ddagger(v, \Lambda) \subset \mathfrak{o}\right\}.
	\end{align*}
	
\noindent Given a lattice $\Lambda$, we have a corresponding order in $End(V)$, defined by

	\begin{align*}
	End(\Lambda) &= \left\{\sigma \in End(V) \middle| \sigma(\Lambda) \subset \Lambda\right\}.
	\end{align*}
	
\noindent Note that scaling the lattice $\Lambda$ does not change the order.
	
\begin{lemma}\label{EichlerSurjection}
We have a map

	\begin{align*}
	\varphi: \left\{\substack{\text{lattices in } V \\ \text{up to scaling}}\right\} &\rightarrow \left\{\text{orders of } End(V)\right\} \\
	[\Lambda] &\mapsto End(\Lambda) \cap End(\Lambda^\sharp)
	\end{align*}
	
\noindent that surjects onto the subset of Eichler orders of the form $\OO \cap \OO^\ddagger$.
\end{lemma}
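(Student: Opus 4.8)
The plan is to establish two things: that $\varphi$ lands in the set of orders of the form $\OO \cap \OO^\ddagger$, and that every such order is hit. For the first claim, I would show that for any lattice $\Lambda$, one has $\End(\Lambda)^\ddagger = \End(\Lambda^\sharp)$, so that $\varphi([\Lambda]) = \End(\Lambda) \cap \End(\Lambda)^\ddagger$ is manifestly of the required form. This follows from the defining adjunction property of $\ddagger = \ddagger_{b_\ddagger}$: for $\sigma \in \End(V)$, $\sigma(\Lambda^\sharp) \subset \Lambda^\sharp$ iff $b_\ddagger(\sigma w, \Lambda) \subset \mathfrak{o}$ for all $w \in \Lambda^\sharp$, iff $b_\ddagger(w, \sigma^\ddagger \Lambda) \subset \mathfrak{o}$ for all $w \in \Lambda^\sharp$, iff $\sigma^\ddagger \Lambda \subset (\Lambda^\sharp)^\sharp = \Lambda$. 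Thus $\sigma \in \End(\Lambda^\sharp) \iff \sigma^\ddagger \in \End(\Lambda)$, which gives $\End(\Lambda^\sharp) = \End(\Lambda)^\ddagger$ exactly. I should also note that this is well-defined on scaling classes, which the excerpt already remarks.

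For surjectivity, suppose $\OO$ is a maximal order of $\End(V)$; over a local field it is well known that $\OO = \End(\Lambda)$ for some lattice $\Lambda$, unique up to scaling. Then $\OO^\ddagger = \End(\Lambda)^\ddagger = \End(\Lambda^\sharp)$ by the computation above, so $\OO \cap \OO^\ddagger = \End(\Lambda) \cap \End(\Lambda^\sharp) = \varphi([\Lambda])$. Hence every Eichler order of the form $\OO \cap \OO^\ddagger$ arising from a maximal order $\OO$ is in the image of $\varphi$. Since by the introduction's own remark every order of the form $\OO \cap \OO^\ddagger$ with $\OO$ maximal is precisely what is meant, this establishes the surjection onto that subset. (If one instead allows $\OO$ to range over \emph{all} orders, one still writes $\OO = \End(\Lambda)$ for a possibly non-maximal order — but every order of $\End(V)$ over a local field is of the form $\End(\Lambda)$ for a lattice $\Lambda$, up to scaling, so the same argument applies verbatim.)

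The main obstacle, such as it is, is purely bookkeeping: verifying that the bidual satisfies $(\Lambda^\sharp)^\sharp = \Lambda$, which requires $b_\ddagger$ to be nonsingular — this is guaranteed since $\ddagger_b$ is only defined for nonsingular $b$ — and making sure the equivalence $\sigma(\Lambda^\sharp)\subset\Lambda^\sharp \iff \sigma^\ddagger(\Lambda)\subset\Lambda$ is stated symmetrically enough that applying it with $\sigma$ replaced by $\sigma^\ddagger$ (and using $\ddagger^2 = \mathrm{id}$) recovers the reverse inclusion. There is no serious analytic or structural difficulty here; the content is the adjunction identity, and the rest is the standard fact that orders in $\End(V)$ over a local ring are endomorphism rings of lattices. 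I would present the lemma's proof in essentially three lines: recall $\OO = \End(\Lambda)$, compute $\End(\Lambda)^\ddagger = \End(\Lambda^\sharp)$ via adjunction, conclude.
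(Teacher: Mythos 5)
Your proposal is correct and takes essentially the same route as the paper: the two ingredients — that over a local field every maximal order of $End(V)$ is $End(\Lambda)$ for some lattice $\Lambda$, and that $End(\Lambda)^\ddagger = End(\Lambda^\sharp)$ by the adjunction property of $\ddagger$ — are exactly the paper's two observations. One small caution: your parenthetical claim that \emph{every} order of $End(V)$ over a local field is of the form $End(\Lambda)$ is false (non-maximal Eichler orders, for instance, are not endomorphism rings of a single lattice), but this does not affect the argument, since the paper's Eichler orders are by definition intersections of maximal orders and so the relevant $\OO$ is always maximal.
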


\begin{proof}
The proof is based on two observations. First, over a local field, $\mathfrak{o}$ is a PID, and it follows that all maximal orders are of the form $End(\Lambda)$ for some lattice $\Lambda$. Secondly, $End(\Lambda)^\ddagger = End(\Lambda^\sharp)$, since

	\begin{align*}
	End(\Lambda)^\ddagger &= \left\{\sigma \in End(V) \middle| \sigma^\ddagger\Lambda \subset \Lambda \right\} \\
	&= \left\{\sigma \in End(V) \middle| b\left(\Lambda^\sharp, \sigma^\ddagger \Lambda\right) \subset \mathfrak{o} \right\} \\
	&= \left\{\sigma \in End(V) \middle| b\left(\sigma \Lambda^\sharp, \Lambda\right) \subset \mathfrak{o} \right\} \\
	&= \left\{\sigma \in End(V) \middle| \sigma \Lambda^\sharp \subset \Lambda^\sharp \right\} \\
	&= End(\Lambda^\sharp).
	\end{align*}
\end{proof}

We recall the definitions of modular and maximal lattices. The scale, norm, and volume of a lattice $\Lambda$ are the fractional ideals

	\begin{align*}
	\mathfrak{s} \Lambda &= \bigcup_{v_1, v_2 \in \Lambda} b_\ddagger(v_1, v_2) \mathfrak{o}, \\
	\mathfrak{n} \Lambda &= \bigcup_{v \in \Lambda} q_\ddagger(v) \mathfrak{o}, \\
	\mathfrak{v} \Lambda &= \bigcup_{\Lambda = \mathfrak{o} v_1 \bot \mathfrak{o} v_2} \det\left(b_\ddagger(v_i, v_j)\right) \mathfrak{o},
	\end{align*}
	
\noindent respectively. Let $\mathfrak{a}$ be a fractional ideal of $F$. Then $\Lambda$ is $\mathfrak{a}$-modular if $\Lambda = \mathfrak{a}\Lambda^\sharp$---equivalently, if $\mathfrak{s}\Lambda = \mathfrak{a}$ and $\mathfrak{v}\Lambda = \mathfrak{a}^2$. Also, $\Lambda$ is $\mathfrak{a}$-maximal if $\mathfrak{n}\Lambda \subset \mathfrak{a}$ and for any other lattice $\Lambda' \supset \Lambda$

	\begin{align*}
	\mathfrak{n}\Lambda' \subset \mathfrak{a} \Rightarrow \Lambda = \Lambda'.
	\end{align*}

\noindent Isomorphisms of algebras with involution respect maximality---that is, they send maximal lattices to maximal lattices.
	
\begin{lemma}\label{MaximalOrdersPreserved}
Let $\phi \in PGO\left(End(V), \ddagger\right)$, and let $\Lambda$ be an $\mathfrak{a}$-maximal lattice. Then there exists $\lambda \in F^\times$ and a $\lambda \mathfrak{a}$-maximal lattice $\Lambda'$ such that

	\begin{align*}
	\phi\left(End(\Lambda) \cap End(\Lambda^\sharp)\right) &= End(\Lambda') \cap End({\Lambda'}^\sharp).
	\end{align*}
	
\noindent Conversely, if $\Lambda, \Lambda'$ are $\mathfrak{a}$-maximal lattices, then there exists $\phi \in PGO\left(End(V), \ddagger\right)$ such that

	\begin{align*}
	\phi\left(End(\Lambda) \cap End(\Lambda^\sharp)\right) &= End(\Lambda') \cap End({\Lambda'}^\sharp).
	\end{align*}
\end{lemma}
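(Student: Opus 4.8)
The plan is to prove the two directions separately, exploiting Lemma~\ref{DescriptionOfIsomorphisms} to translate statements about $PGO(End(V),\ddagger)$ into statements about similitudes of the quadratic form $q_\ddagger$, and then to track what such maps do to the scale and norm ideals of lattices. For the first statement, I would start with $\phi \in PGO(End(V),\ddagger)$, lift it to $g \in GO(End(V),\ddagger) \subset GL(V)$, so that $\phi(\sigma) = g\sigma g^{-1}$, and observe that $\phi(End(\Lambda)) = End(g\Lambda)$ by the definition of $End(\Lambda)$. By Lemma~\ref{DescriptionOfIsomorphisms} there is some $\mu \in F^\times$ (namely $\mu = (-1)^l\det(g)$) with $q_\ddagger(gv) = \mu\, q_\ddagger(v)$ for all $v$, hence also $b_\ddagger(gv,gw) = \mu\, b_\ddagger(v,w)$. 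Setting $\Lambda' = g\Lambda$, a direct computation from the definition of the dual lattice gives ${\Lambda'}^\sharp = (g\Lambda)^\sharp$; since $b_\ddagger$ gets scaled by $\mu$ under $g$, one checks $(g\Lambda)^\sharp = \mu^{-1} g(\Lambda^\sharp)$, so $End({\Lambda'}^\sharp) = End(g\Lambda^\sharp) = \phi(End(\Lambda^\sharp))$, the scaling being invisible to $End(-)$. Intersecting, $\phi(End(\Lambda)\cap End(\Lambda^\sharp)) = End(\Lambda')\cap End({\Lambda'}^\sharp)$.

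It then remains to identify the modularity/maximality class of $\Lambda' = g\Lambda$. Because $q_\ddagger(gv) = \mu\, q_\ddagger(v)$, we get $\mathfrak{n}\Lambda' = \mu\, \mathfrak{n}\Lambda$ directly from the definition of the norm ideal, and similarly $\mathfrak{s}\Lambda' = \mu\,\mathfrak{s}\Lambda$ and $\mathfrak{v}\Lambda' = \mu^2\, \mathfrak{v}\Lambda$. From these scaling relations it is immediate that if $\Lambda$ is $\mathfrak{a}$-maximal then $\Lambda'$ is $\mu\mathfrak{a}$-maximal: the containment $\mathfrak{n}\Lambda' \subset \mu\mathfrak{a}$ is clear, and maximality is preserved since $g$ is a bijection on lattices carrying $\Lambda'' \supset \Lambda'$ back to $g^{-1}\Lambda'' \supset \Lambda$ and scaling all norm ideals by $\mu^{-1}$. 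Taking $\lambda = \mu$ gives the claim. (The same remark shows $\mathfrak{a}$-modularity is sent to $\mu\mathfrak{a}$-modularity, which we will want later, though it is not asked here.)

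For the converse, suppose $\Lambda$ and $\Lambda'$ are both $\mathfrak{a}$-maximal. The key fact I would invoke is the classification of quadratic lattices over a local field: for a fixed quadratic space $(V, q_\ddagger)$ and a fixed fractional ideal $\mathfrak{a}$, any two $\mathfrak{a}$-maximal lattices are isometric, i.e.\ there exists $g \in O(V, q_\ddagger) \subset GL(V)$ with $g\Lambda = \Lambda'$ (this is the local case of the standard theory of maximal lattices; over a local field $\mathfrak{o}$ is a PID and there is a unique genus, hence a unique class, of $\mathfrak{a}$-maximal lattices in a given space). Such a $g$ is in particular a similitude with factor $1$, hence lies in $GO(End(V),\ddagger)$ by Lemma~\ref{DescriptionOfIsomorphisms}, so it descends to some $\phi \in PGO(End(V),\ddagger)$. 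Then by the computation in the first part, $\phi(End(\Lambda)\cap End(\Lambda^\sharp)) = End(g\Lambda)\cap End((g\Lambda)^\sharp) = End(\Lambda')\cap End({\Lambda'}^\sharp)$, as desired.

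The main obstacle I anticipate is the converse direction, specifically justifying that $\mathfrak{a}$-maximal lattices in a fixed $2$-dimensional quadratic space over a local field form a single orbit under the orthogonal group. In the non-dyadic case this is elementary, but in the dyadic case one must be careful — it relies on the uniqueness of the maximal lattice in each Jordan-type piece and may genuinely use that $\dim V = 2$. If a clean reference to the classification of maximal lattices (e.g.\ O'Meara) does not suffice in the dyadic setting, the fallback is to defer this step to the dyadic-specific section and prove it there by an explicit normal form for $q_\ddagger$, using Lemma~\ref{ExplicitAlgebra} to pin down the form up to the data $\lambda = disc(\ddagger)$. Everything else — the $End(-)$ computation, the behavior of $\sharp$ under similitudes, and the scaling of $\mathfrak{s},\mathfrak{n},\mathfrak{v}$ — is routine bookkeeping.
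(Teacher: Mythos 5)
Your proposal is correct and follows essentially the same route as the paper: lift $\phi$ to $g \in GO(End(V),\ddagger)$, compute $\phi(End(\Lambda)\cap End(\Lambda^\sharp)) = End(g\Lambda)\cap End((g\Lambda)^\sharp)$, note that $g\Lambda$ is $\det(g)\mathfrak{a}$-maximal, and for the converse invoke the fact that any two $\mathfrak{a}$-maximal lattices on a quadratic space over a local field are isometric. Your version simply spells out the dual-lattice and norm-ideal bookkeeping that the paper leaves as ``straightforward,'' and the dyadic worry you raise is handled by the standard classification of maximal lattices (O'Meara), exactly as the paper implicitly assumes.
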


\begin{proof}
Choose a representative $g \in GO(H,\ddagger)$ such that $\phi(\sigma) = g \sigma g^{-1}$. Then

	\begin{align*}
	\phi\left(End(\Lambda) \cap End(\Lambda^\sharp)\right) &= g End(\Lambda)g^{-1} \cap g End(\Lambda)^\ddagger g^{-1} \\
	&= End(g\Lambda) \cap End(g\Lambda)^\ddagger \\
	&= End(g\Lambda) \cap End\left((g\Lambda)^\sharp\right).
	\end{align*}
	
\noindent It is straightforward to see that if $\Lambda$ is $\mathfrak{a}$-maximal, then $g\Lambda$ is $\det(g) \mathfrak{a}$-maximal.

On the other hand, if $\Lambda, \Lambda'$ are both $\mathfrak{a}$-maximal lattices over a local field, then there is an isomorphism of quadratic spaces $g: V \rightarrow V$ such that $\Lambda = g\Lambda'$. We have the desired the isomorphism

	\begin{align*}
	\phi(\sigma) = g\sigma g^{-1}.
	\end{align*}
\end{proof}

Finally, we also need the following technical lemma.

\begin{lemma}\label{TechnicalLemma}
Let $H = Mat(2,F)$ with the involution $\ddagger_\lambda$. Suppose $\Lambda$ is a lattice in $F^2$ with an orthogonal basis. Then there exists $g \in GO\left(Mat(2,F), \ddagger_\lambda\right)$ such that

	\begin{align*}
	End(\Lambda) \cap End(\Lambda^\sharp) \subset g \left(Mat(2,\mathfrak{o}) \cap Mat(2,\mathfrak{o})^\ddagger\right) g^{-1}.
	\end{align*}
\end{lemma}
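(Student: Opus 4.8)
The plan is to produce the required $g$ by hand, by straightening an orthogonal basis of $\Lambda$ onto the standard coordinate axes after a flip and a scalar rescaling.

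First I would fix coordinates. Since $b_\ddagger$ is only determined up to scaling, I may take its Gram matrix in the standard basis $e_1,e_2$ of $F^2$ to be $\mathrm{diag}(\lambda,1)$; a one-line computation confirms that $\ddagger_\lambda$ is the adjoint involution of this form. By Lemma~\ref{DescriptionOfIsomorphisms} (and the definition of $GO$), $GO(Mat(2,F),\ddagger_\lambda)$ is exactly the group of similitudes of $(F^2,b_\ddagger)$. Now write $\Lambda=\mathfrak{o}v_1\oplus\mathfrak{o}v_2$ with $b_\ddagger(v_1,v_2)=0$ and $q_\ddagger(v_1),q_\ddagger(v_2)$ both nonzero (as $b_\ddagger$ is nondegenerate), ordered so that $\alpha:=ord_\mathfrak{p}(q_\ddagger(v_1))\le ord_\mathfrak{p}(q_\ddagger(v_2))=:\gamma$; put $n=\gamma-\alpha\ge 0$ and $m=ord_\mathfrak{p}(\lambda)\in\{0,1\}$, the latter because $\iota(disc(\ddagger))$ is square-free. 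Since the discriminant of the quadratic space $(F^2,b_\ddagger)$ is well-defined in $F^\times/(F^\times)^2$ and is computed both as $q_\ddagger(e_1)q_\ddagger(e_2)(F^\times)^2=\lambda(F^\times)^2$ and as $q_\ddagger(v_1)q_\ddagger(v_2)(F^\times)^2$, we get $\alpha+\gamma\equiv m\pmod 2$, and hence the two facts I will use repeatedly: $n\equiv m\pmod 2$ and $n\ge m$ (if $m=1$ then $n$ is odd, so $n\ge 1$).

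Next I would build $g$. Because $q_\ddagger(v_1)q_\ddagger(v_2)^{-1}\lambda\equiv q_\ddagger(v_1)q_\ddagger(v_2)\lambda\equiv\lambda^2\equiv 1\pmod{(F^\times)^2}$, I can choose $\rho\in F^\times$ with $\rho^2=q_\ddagger(v_1)q_\ddagger(v_2)^{-1}\lambda$, and then $ord_\mathfrak{p}(\rho)=\tfrac12(m-n)\in\ZZ$. Define $g\in GL(F^2)$ by $ge_1=\rho v_2$ and $ge_2=v_1$; a short check---the cross term vanishes because both bases are orthogonal---shows $g$ is a similitude with multiplier $q_\ddagger(v_1)$, hence $g\in GO(Mat(2,F),\ddagger_\lambda)$. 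The point of this particular $g$ is that it carries the dilated axis $\mathfrak{o}e_1$ of the standard lattice into the more dilated axis $\mathfrak{o}v_2$ of $\Lambda$; the naive map sending $v_i$ to scalar multiples of $e_i$ does not yield the inclusion, and getting this flip right is the one genuinely delicate point.

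Finally I would verify the containment by a direct computation in the basis $v_1,v_2$. Writing $\Lambda_0=\mathfrak{o}e_1\oplus\mathfrak{o}e_2$, the computation in the proof of Lemma~\ref{MaximalOrdersPreserved} gives $g\bigl(Mat(2,\mathfrak{o})\cap Mat(2,\mathfrak{o})^{\ddagger_\lambda}\bigr)g^{-1}=End(g\Lambda_0)\cap End((g\Lambda_0)^\sharp)$. Computing $g\Lambda_0$ and its dual and using $End(\mathfrak{o}v_1\oplus\mathfrak{p}^k v_2)=\left(\begin{smallmatrix}\mathfrak{o}&\mathfrak{p}^{-k}\\\mathfrak{p}^k&\mathfrak{o}\end{smallmatrix}\right)$ in the basis $v_1,v_2$, this conjugate works out to $\left(\begin{smallmatrix}\mathfrak{o}&\mathfrak{p}^{(n+m)/2}\\\mathfrak{p}^{(m-n)/2}&\mathfrak{o}\end{smallmatrix}\right)$, whereas $\Lambda^\sharp$ is homothetic to $\mathfrak{o}v_1\oplus\mathfrak{p}^{-n}v_2$, so $End(\Lambda)\cap End(\Lambda^\sharp)=\left(\begin{smallmatrix}\mathfrak{o}&\mathfrak{p}^n\\\mathfrak{o}&\mathfrak{o}\end{smallmatrix}\right)$. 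The desired inclusion
\[
\begin{pmatrix}\mathfrak{o}&\mathfrak{p}^n\\\mathfrak{o}&\mathfrak{o}\end{pmatrix}\ \subset\ \begin{pmatrix}\mathfrak{o}&\mathfrak{p}^{(n+m)/2}\\\mathfrak{p}^{(m-n)/2}&\mathfrak{o}\end{pmatrix}
\]
is then entrywise, and it amounts precisely to $n\ge(n+m)/2$ together with $(m-n)/2\le 0$, both of which are the inequality $n\ge m$ established above. As indicated, the expected obstacle is pinning down this $g$---the flip together with the scalar $\rho$, whose existence is exactly the statement that $disc(\ddagger)$ is represented both by $\lambda$ and by $q_\ddagger(v_1)q_\ddagger(v_2)$; everything else is routine bookkeeping with dual lattices and the scaling-invariance of $End(-)$.
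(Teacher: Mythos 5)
Your proof is correct and is essentially the paper's own argument: both hinge on the observation that the two diagonal Gram entries of an orthogonal basis have ratio congruent to $\lambda$ modulo $(F^\times)^2$ (your discriminant computation is the paper's verification that $\lambda x/y$ is a square), both use this to produce an element of $GO\left(Mat(2,F),\ddagger_\lambda\right)$ reducing to a diagonal rescaling, and both finish with the same entrywise computation of the two Eichler orders. One small correction to an aside: the ``flip'' $e_1\mapsto\rho v_2$, $e_2\mapsto v_1$ is not actually the delicate point you take it to be, since the naive similitude sending each $e_i$ to a scalar multiple of $v_i$ produces a lattice homothetic to $(g\Lambda_0)^\sharp$ and hence the very same Eichler order $End(g\Lambda_0)\cap End\left((g\Lambda_0)^\sharp\right)$, so the inclusion holds either way; the genuinely essential ingredient is the existence of the scalar $\rho$, i.e.\ the discriminant constraint.
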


\begin{proof}
We are given that $\Lambda = \mathfrak{o} e_1 \ \bot \ \mathfrak{o} e_2$. Therefore, we know that the Gram matrix $\left(b_\ddagger(e_i,e_j)\right)_{ij}$ can be assumed to be of the form

	\begin{align*}
	\begin{pmatrix} x & 0 \\ 0 & y \end{pmatrix}
	\end{align*}
	
\noindent Note that if $e_1 = (a,b)$, $e_2 = (c,d)$, then

	\begin{align*}
	\begin{pmatrix} x & 0 \\ 0 & y \end{pmatrix} &= \begin{pmatrix} a^2 \lambda + c^2 & c d + a b \lambda \\ cd + ab \lambda & b^2 \lambda + \mathfrak{o}^2 \end{pmatrix},
	\end{align*}
	
\noindent and so

	\begin{align*}
	\lambda x/y &= \frac{a^2 \lambda^2 + c^2\lambda}{b^2 \lambda + \mathfrak{o}^2} \\
	&= \frac{a^2 b^2 \lambda^2 + b^2 c^2\lambda}{b^2\left(b^2 \lambda + \mathfrak{o}^2\right)} \\
	&= \frac{c^2 \mathfrak{o}^2 + b^2 c^2\lambda}{b^2\left(b^2 \lambda + \mathfrak{o}^2\right)} \\
	&= \frac{c^2}{b^2} \in \left(F^\times\right)^2 \\
	\end{align*}
	
\noindent Consequently, the Gram matrix must be of the form

	\begin{align*}
	\begin{pmatrix} x & 0 \\ 0 & y \end{pmatrix} &= \begin{pmatrix} \lambda \pi^{2n} z \epsilon & 0 \\ 0 & z\end{pmatrix},
	\end{align*}
	
\noindent where $z \in \mathfrak{o}$, $\epsilon \in {\mathfrak{o}^\times}^2$ and $n \in \ZZ$. In fact, we can assume that $\epsilon = 1$, since we are free to scale $e_2$ by an element of $D^\times$, and this scales $q_\ddagger(e_2)$ by an element of ${\mathfrak{o}^\times}^2$.

However, the Gram matrix defines the lattice $\Lambda$ up to isomorphism of quadratic spaces, which we know lift to isomorphisms of $\ddagger$-orders. Therefore, it suffices to construct a single lattice for each Gram matrix. This is done explicitly by

	\begin{align*}
	\Lambda &= \mathfrak{o} \pi^n\left(a, -b\lambda\right) \oplus \mathfrak{o} \pi^n \left(b, a\right)
	\end{align*}
	
\noindent where

	\begin{align*}
	z &= a^2 + b^2 \lambda = q_\ddagger\left((a,b)\right).
	\end{align*}
	
\noindent If we define a matrix

	\begin{align*}
	L &= \begin{pmatrix} a & b \\ -b\lambda & a\end{pmatrix} \begin{pmatrix} \pi^n & 0 \\ 0 & 1 \end{pmatrix},
	\end{align*}
	
\noindent then

	\begin{align*}
	\sigma \in End(\Lambda) &\Leftrightarrow \sigma \in L Mat(2,\mathfrak{o}) L^{-1}.
	\end{align*}
	
\noindent However,

	\begin{align*}
	\begin{pmatrix} a & b \\ -b\lambda & a\end{pmatrix}^{\ddagger_\lambda}\begin{pmatrix} a & b \\ -b\lambda & a\end{pmatrix} &= \left(a^2 + b^2 \lambda\right) I
	\end{align*}
	
\noindent so

	\begin{align*}
	\begin{pmatrix} a & b \\ -b\lambda & a\end{pmatrix} \in GO\left(Mat(2,F), \ddagger_\lambda\right).
	\end{align*}
	
\noindent As we remarked earlier, conjugating by elements of $GO\left(Mat(2,F), \ddagger_\lambda\right)$ corresponds to isomorphism of $\ddagger$-orders, so we may assume that

	\begin{align*}
	L &= \begin{pmatrix} \pi^n & 0 \\ 0 & 1 \end{pmatrix},
	\end{align*}
	
\noindent and therefore

	\begin{align*}
	End(\Lambda) &= \begin{pmatrix} \pi^n & 0 \\ 0 & 1 \end{pmatrix} Mat(2,\mathfrak{o}) \begin{pmatrix} \pi^n & 0 \\ 0 & 1 \end{pmatrix}^{-1} \\
	End(\Lambda^\sharp) &= \begin{pmatrix} \pi^n & 0 \\ 0 & 1 \end{pmatrix}^{-1} Mat(2,\mathfrak{o})^\sharp \begin{pmatrix} \pi^n & 0 \\ 0 & 1 \end{pmatrix}.
	\end{align*}
	
\noindent We can then explicitly compute
	
	\begin{align*}
	End(\Lambda) \cap End(\Lambda^\sharp) &= \begin{cases} \left\{\begin{pmatrix} a & b \pi^{|n|} \\ c \pi^{|n| + 1} & d \end{pmatrix} \middle| a,b,c,d \in \mathfrak{o} \right\} & \text{if } \iota(disc(\ddagger)) = \mathfrak{p} \vspace{5pt} \\ \left\{\begin{pmatrix} a & b \pi^{|n|} \\ c \pi^{|n|} & d \end{pmatrix} \middle| a,b,c,d \in \mathfrak{o} \right\} & \text{if } \iota(disc(\ddagger)) = \mathfrak{o} \end{cases} \\
	&\subset Mat(2,\mathfrak{o}) \cap Mat(2,\mathfrak{o})^\ddagger.
	\end{align*}
\end{proof}

\section{The Non-Dyadic Case:}

We now specialize to the case where $F$ is a local field over a non-dyadic place (that is, $2 \in \mathfrak{o}^\times$). 

\begin{theorem}\label{NonDyadic}
Suppose that $A \cong End(V)$ is a split quaternion algebra over a local field $F$. Suppose $\OO$ is a maximal $\ddagger$-order. Then there exists a fractional ideal $\mathfrak{a} \subset \mathfrak{o}$ and an $\mathfrak{a}$-maximal lattice $\Lambda$ such that

	\begin{align*}
	\OO = End(\Lambda) \cap End(\Lambda^\sharp).
	\end{align*}
	
Furthermore, all maximal $\ddagger$-orders are isomorphic, and

	\begin{align*}
	disc(\OO) = disc(H) \cap \iota(disc(\ddagger)).
	\end{align*}
\end{theorem}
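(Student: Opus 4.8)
The plan is to show that, up to conjugation by $GO(H,\ddagger)$, there is exactly one maximal $\ddagger$-order, namely $\mathcal{M} := Mat(2,\mathfrak{o}) \cap Mat(2,\mathfrak{o})^{\ddagger_\lambda}$, where $\lambda$ is the chosen generator of $\iota(disc(\ddagger))$ and $(H,\ddagger)$ has been identified with $(Mat(2,F),\ddagger_\lambda)$ via Lemma \ref{ExplicitAlgebra}. First I would record an explicit description of $\mathcal{M}$: if $\iota(disc(\ddagger)) = \mathfrak{o}$ then $\lambda \in \mathfrak{o}^\times$ and $\ddagger_\lambda$ preserves $Mat(2,\mathfrak{o})$, so $\mathcal{M} = Mat(2,\mathfrak{o})$; if $\iota(disc(\ddagger)) = \mathfrak{p}$ then $\mathcal{M}$ is the standard Eichler order of level $\mathfrak{p}$. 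Since $H$ is split, $disc(H) = \mathfrak{o}$, so in both cases $disc(\mathcal{M}) = \iota(disc(\ddagger)) = disc(H) \cap \iota(disc(\ddagger))$. I would also observe that $\mathcal{M} = End(\Lambda_0) \cap End(\Lambda_0^\sharp)$, where $\Lambda_0 = \mathfrak{o}^2 \subset F^2$ is equipped with $q_{\ddagger_\lambda}(x,y) = \lambda x^2 + y^2$, and that a short Gram-matrix computation---using that every binary lattice over a non-dyadic local field has an orthogonal basis---shows $\Lambda_0$ is $\mathfrak{o}$-maximal.

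The core step is the claim that every $\ddagger$-order of $H$ is contained in some conjugate $g\mathcal{M}g^{-1}$ with $g \in GO(H,\ddagger)$. Given a $\ddagger$-order $\mathcal{N}$, it lies inside a maximal order, which over a local field equals $End(\Lambda)$ for some lattice $\Lambda$; as $\mathcal{N}$ is closed under $\ddagger$ we get $\mathcal{N} \subseteq End(\Lambda) \cap End(\Lambda)^\ddagger = End(\Lambda) \cap End(\Lambda^\sharp)$ by Lemma \ref{EichlerSurjection}. Since $F$ is non-dyadic, $\Lambda$ has an orthogonal basis, so Lemma \ref{TechnicalLemma} supplies $g \in GO(H,\ddagger)$ with $End(\Lambda) \cap End(\Lambda^\sharp) \subseteq g\mathcal{M}g^{-1}$. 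Because any $g \in GO(H,\ddagger)$ satisfies $g^\ddagger = (g^\ddagger g)g^{-1} \in F^\times g^{-1}$, conjugation by $g$ commutes with $\ddagger$, so $g\mathcal{M}g^{-1}$ is again a $\ddagger$-order with the same discriminant as $\mathcal{M}$.

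The theorem then follows by assembly. Applying the claim to $\mathcal{N} = \mathcal{M}$ gives $\mathcal{M} \subseteq g\mathcal{M}g^{-1}$ with equal discriminants, hence $\mathcal{M} = g\mathcal{M}g^{-1}$; consequently any $\ddagger$-order containing $\mathcal{M}$ is contained in $g\mathcal{M}g^{-1} = \mathcal{M}$, so $\mathcal{M}$ is itself a maximal $\ddagger$-order. Conversely, if $\OO$ is any maximal $\ddagger$-order, the claim gives $\OO \subseteq g\mathcal{M}g^{-1}$ for some $g$, and since the right-hand side is a $\ddagger$-order, maximality forces $\OO = g\mathcal{M}g^{-1}$. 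Thus all maximal $\ddagger$-orders are $GO(H,\ddagger)$-conjugates of $\mathcal{M}$; in particular they are all isomorphic as algebras with involution, and $disc(\OO) = disc(\mathcal{M}) = disc(H) \cap \iota(disc(\ddagger))$. Finally, writing $\OO = g\mathcal{M}g^{-1} = End(g\Lambda_0) \cap End((g\Lambda_0)^\sharp)$ (scaling a lattice does not change its endomorphism ring, so replacing $g\Lambda_0^\sharp$ by $(g\Lambda_0)^\sharp$ causes no trouble), Lemma \ref{MaximalOrdersPreserved} together with the $\mathfrak{o}$-maximality of $\Lambda_0$ shows that $g\Lambda_0$ is $\det(g)\mathfrak{o}$-maximal; rescaling $\Lambda := g\Lambda_0$ by a suitable power of $\pi$---which does not affect $End(\Lambda) \cap End(\Lambda^\sharp)$---brings the coefficient ideal $\mathfrak{a}$ into $\{\mathfrak{o},\mathfrak{p}\} \subset \mathfrak{o}$, as required.

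Given that Lemma \ref{TechnicalLemma} is already available, the remaining work is largely bookkeeping, and the step I expect to require the most care is the interface between lattices and orders: verifying that $g\mathcal{M}g^{-1}$ is genuinely of the form $End(\Lambda')\cap End(\Lambda'^\sharp)$ with $\Lambda'$ still \emph{maximal}, not merely that the order is maximal. This is precisely what Lemma \ref{MaximalOrdersPreserved} is built to handle, so the only genuinely new computation is the $\mathfrak{o}$-maximality of $\Lambda_0$, which follows from the inclusion $\mathfrak{v}\Lambda' \subseteq (\mathfrak{n}\Lambda')^2$ (valid for any binary lattice with an orthogonal basis) together with the fact that a proper overlattice of $\Lambda_0$ has strictly larger volume ideal, forcing its norm ideal out of $\mathfrak{o}$.
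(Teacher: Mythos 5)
Your proposal is correct and follows essentially the same route as the paper: reduce to $(Mat(2,F),\ddagger_\lambda)$ via Lemma \ref{ExplicitAlgebra}, use the non-dyadic fact that every binary lattice has an orthogonal basis together with Lemma \ref{TechnicalLemma} to trap every $\ddagger$-order inside a $GO$-conjugate of $Mat(2,\mathfrak{o}) \cap Mat(2,\mathfrak{o})^\ddagger$, verify that $\mathfrak{o}^2$ is $\mathfrak{o}$-maximal, and finish with Lemma \ref{MaximalOrdersPreserved}. Your assembly step (first proving $\mathcal{M}$ is itself a maximal $\ddagger$-order, then conjugating) and your direct volume-ideal argument for the maximality of $\mathfrak{o}^2$ are minor, valid variants of what the paper does with its cited square-free-discriminant criterion.
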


\begin{proof}
By Lemma \ref{ExplicitAlgebra}, we can assume that $H = Mat(2,F)$ and $\ddagger = \ddagger_\lambda$. In this case, the lattice

	\begin{align*}
	\mathfrak{o}^2 &= \mathfrak{o} \begin{pmatrix} 1 \\ 0 \end{pmatrix} + \mathfrak{o} \begin{pmatrix} 0 \\ 1 \end{pmatrix}
	\end{align*}
	
\noindent is $\mathfrak{o}$-maximal. This follows from the fact (see \cite{QuadraticForms}, p. 246) that if $\mathfrak{n}\Lambda \subset \mathfrak{a}$ and the ideal

	\begin{align*}
	2^2 \mathfrak{a}^{-2} \mathfrak{v}\Lambda
	\end{align*}
	
\noindent has no square factors, then $\Lambda$ is $\mathfrak{a}$-maximal. It is easy to see that $\mathfrak{n}\mathfrak{o}^2 = \mathfrak{o}$, and $\mathfrak{v}\mathfrak{o}^2 = \mathfrak{o}$ or $\mathfrak{p}$. In either case, the required ideal is square-free, and therefore $\mathfrak{o}^2$ is $\mathfrak{o}$-maximal.

We will show that all maximal $\ddagger$-orders are isomorphic to

	\begin{align*}
	Mat(2,\mathfrak{o}) \cap Mat(2,\mathfrak{o})^\ddagger &= \begin{cases} \left\{\begin{pmatrix} a & b \\ \pi c & d \end{pmatrix} \middle| a,b,c,d \in \mathfrak{o}\right\} & \text{if } \iota(disc(\ddagger)) = \mathfrak{p} \\ Mat(2,\mathfrak{o}) & \text{if } \iota(disc(\ddagger)) = \mathfrak{o}. \end{cases}
	\end{align*}

\noindent Note that this order has the desired discriminant and is the $\ddagger$-order corresponding to the maximal lattice $\mathfrak{o}^2$. By Lemma \ref{MaximalOrdersPreserved}, we will then have proved the theorem.

By Lemmas \ref{EichlerSurjection} and \ref{DescriptionOfIsomorphisms}, to show that all maximal $\ddagger$-orders are isomorphic to $Mat(2,\mathfrak{o}) \cap Mat(2,\mathfrak{o})^\ddagger$, it is enough to show that for every lattice $\Lambda \subset F^2$, there exists a $g \in GO\left(Mat(2,F), \ddagger_\lambda\right)$ such that

	\begin{align*}
	End(\Lambda) \cap End(\Lambda^\sharp) \subset g \left(Mat(2,\mathfrak{o}) \cap Mat(2,\mathfrak{o})^\ddagger\right) g^{-1}.
	\end{align*}
	
\noindent Since $F$ is non-dyadic, every lattice has an orthogonal basis, and so the result follows by Lemma \ref{TechnicalLemma}.
\end{proof}

\section{The Dyadic Case}

If $F$ is a local field over a dyadic place, the behavior of maximal $\ddagger$-orders is more complicated---unlike the non-dyadic case, it need not be true that $\mathfrak{o}^2$ is a maximal $\mathfrak{o}$-lattice.

\begin{lemma}\label{DefectOfLattice}
Let $F$ be a local field over a dyadic place. Let $\lambda \in F^\times$ be integral and square-free. The lattice $\mathfrak{o}^2$ is $\mathfrak{o}$-maximal for the quadratic form

	\begin{align*}
	q_\ddagger(x,y) &= x^2 + \lambda y^2
	\end{align*}
	
\noindent on $F^2$ if and only if $\mathfrak{d}(-\lambda) = \mathfrak{p}$.
\end{lemma}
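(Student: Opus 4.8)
The statement is an "iff" about when $\mathfrak{o}^2$ fails to be $\mathfrak{o}$-maximal for the form $q(x,y) = x^2 + \lambda y^2$, phrased in terms of the Artin conductor / different-type invariant $\mathfrak{d}(-\lambda)$ of the quadratic extension $F(\sqrt{-\lambda})/F$ (equivalently, the relative discriminant of the étale algebra $F[t]/(t^2+\lambda)$). The plan is to compute the volume ideal $\mathfrak{v}(\mathfrak{o}^2)$ and the norm ideal $\mathfrak{n}(\mathfrak{o}^2)$ explicitly, and then apply the square-free criterion for maximality already invoked in the proof of Theorem~\ref{NonDyadic}: if $\mathfrak{n}\Lambda \subset \mathfrak{a}$ and $4\mathfrak{a}^{-2}\mathfrak{v}\Lambda$ has no square factors, then $\Lambda$ is $\mathfrak{a}$-maximal. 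Here $\mathfrak{a} = \mathfrak{o}$, so we need to understand when $4\,\mathfrak{v}(\mathfrak{o}^2)$ is square-free, and—crucially—to show that when it is \emph{not} square-free, maximality genuinely fails (the square-free criterion is only sufficient, so the reverse direction needs a separate argument producing an overlattice).

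\textbf{Forward direction (maximal $\Rightarrow \mathfrak{d}(-\lambda)=\mathfrak{p}$), contrapositive.} First I would record that $\mathfrak{n}(\mathfrak{o}^2) = \mathfrak{o}$ (since $q(1,0)=1$) and $\mathfrak{v}(\mathfrak{o}^2) = \lambda\mathfrak{o}$, using the Gram matrix $\mathrm{diag}(1,\lambda)$ in the bilinear form attached to $q$ (note $b((x_1,y_1),(x_2,y_2)) = x_1x_2 + \lambda y_1 y_2$ in the conventions of the paper, so the determinant is $\lambda$). Next I would recall the dyadic computation of $\mathfrak{d}(-\lambda)$: writing $\lambda$ integral and square-free means $\mathrm{ord}_\mathfrak{p}(\lambda) \in \{0,1\}$; if $\mathrm{ord}_\mathfrak{p}(\lambda)=1$ the extension is ramified and $\mathfrak{d}(-\lambda)$ is a proper nontrivial power of $\mathfrak{p}$ (in particular $\neq \mathfrak{p}$ iff ... — here one uses that $\mathfrak{d}(-\lambda) = \mathfrak{p}$ forces the quadratic defect computation to be as tame as possible), and if $\mathrm{ord}_\mathfrak{p}(\lambda) = 0$ then $\mathfrak{d}(-\lambda)$ measures whether $-\lambda$ is a square, a unit times a square congruent to $1 \bmod 4\mathfrak{o}$, etc. The cleanest route is to note the standard identity relating $\mathfrak{v}(\mathfrak{o}^2) = \lambda\mathfrak{o}$ and $\mathfrak{d}(-\lambda)$: one has $4\,\mathfrak{v}(\mathfrak{o}^2) = 4\lambda\mathfrak{o}$, and this is square-free precisely when $\mathfrak{d}(-\lambda) = \mathfrak{p}$ — this is where the bulk of the genuinely dyadic casework lives, and I would organize it by the value of $\mathrm{ord}_\mathfrak{p}(\lambda)$ and the quadratic defect $\mathfrak{d}(-\lambda)$, invoking O'Meara's tables (63:3, 63:5 in \cite{QuadraticForms}) rather than recomputing from scratch.

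\textbf{Reverse direction (producing an overlattice when $\mathfrak{d}(-\lambda) \neq \mathfrak{p}$).} This is the step I expect to be the main obstacle, because the square-free criterion does not by itself give non-maximality. When $\mathfrak{d}(-\lambda) \neq \mathfrak{p}$, I would explicitly exhibit a lattice $\Lambda' \supsetneq \mathfrak{o}^2$ with $\mathfrak{n}\Lambda' \subset \mathfrak{o}$, contradicting maximality. Concretely: if $\mathfrak{d}(-\lambda) = \mathfrak{o}$, then $-\lambda$ is "close to a square" — either $\lambda \in -(\mathfrak{o}^\times)^2$, in which case $q$ represents $0$ nontrivially and $\mathfrak{o}^2$ sits inside a strictly larger lattice on which $q$ is still integral (the form is isotropic, maximal lattices are "larger"); or $-\lambda \equiv u^2 \bmod 4\mathfrak{o}$ for a unit $u$, in which case the vector $v = (\text{something})/2$ built from $u$ has $q(v) \in \mathfrak{o}$ and $v \notin \mathfrak{o}^2$, so $\Lambda' = \mathfrak{o}^2 + \mathfrak{o} v$ works. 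If instead $\mathfrak{d}(-\lambda) = \mathfrak{p}^k$ with $k \geq 2$ (the ramified case with extra depth), a similar half-integral vector lies in the maximal order of $F(\sqrt{-\lambda})$ but not in $\mathfrak{o}[\sqrt{-\lambda}]$, and transporting it back gives the overlattice. I would unify these by observing that $\mathfrak{o}^2$ with form $x^2 + \lambda y^2$ is, up to the discriminant-$\mathfrak{p}$ ambiguity, the "order" lattice $\mathfrak{o}[\sqrt{-\lambda}]$ inside the quadratic algebra, whose maximality as a lattice is equivalent to its being the maximal order, which happens exactly when $\mathfrak{d}(-\lambda) = \mathfrak{p}$ (the relative different being as small as possible). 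Finally I would double check the boundary bookkeeping — that the factor of $4 = \pi^{2\mathrm{ord}_\mathfrak{p}(2)}\cdot(\text{unit})$ in the criterion contributes an even power and hence does not itself obstruct square-freeness — so that the criterion's square-freeness really is controlled by $\lambda\mathfrak{o}$ together with the defect term, and the two directions match up on the nose.
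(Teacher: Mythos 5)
There is a genuine gap in the direction ``$\mathfrak{d}(-\lambda) = \mathfrak{p} \Rightarrow \mathfrak{o}^2$ is $\mathfrak{o}$-maximal.'' Your plan is to get this from the square-free sufficiency criterion by showing that $2^2\mathfrak{a}^{-2}\mathfrak{v}(\mathfrak{o}^2) = 4\lambda\mathfrak{o}$ is square-free exactly when $\mathfrak{d}(-\lambda) = \mathfrak{p}$. But over a dyadic place $4\mathfrak{o} = \mathfrak{p}^{2e}$ with $e = ord_\mathfrak{p}(2) \geq 1$, so $4\lambda\mathfrak{o} = \mathfrak{p}^{2e + ord_\mathfrak{p}(\lambda)} \subseteq \mathfrak{p}^2$ always has a square factor; the claimed ``standard identity'' is false, and the criterion is vacuous here. (Your closing remark that the factor of $4$ ``contributes an even power and hence does not itself obstruct square-freeness'' is exactly backwards: a power $\mathfrak{p}^{2e}$ with $2e \geq 2$ is precisely a square factor. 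This is the whole reason the dyadic case is delicate and why the paper warns, just before this lemma, that $\mathfrak{o}^2$ need not be maximal dyadically even though it always is non-dyadically.) So the maximality half of the equivalence is not established by your argument, and no amount of casework on $ord_\mathfrak{p}(\lambda)$ rescues it. The paper instead argues directly: after disposing of the isotropic case, it notes that for an anisotropic form the unique $\mathfrak{o}$-maximal lattice is $\{v : q_\ddagger(v) \in \mathfrak{o}\}$, and that the existence of a non-integral vector of integral norm forces, after clearing denominators, a relation $-\lambda \equiv (x/y)^2 \bmod \mathfrak{p}^2$, i.e.\ $\mathfrak{d}(-\lambda) \subseteq \mathfrak{p}^2$; contrapositively $\mathfrak{d}(-\lambda) = \mathfrak{p}$ gives maximality. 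You would need to supply something of this kind.

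Your other half --- exhibiting an overlattice of integral norm when $\mathfrak{d}(-\lambda) \neq \mathfrak{p}$ --- is sound and close to the paper's: the paper takes $x, y$ with $-\lambda = (x/y)^2 + \mathfrak{p}^2$ and adjoins $(x/\pi, y/\pi)$, which is the same mechanism as your ``half-integral vector'' coming from the maximal order of $F(\sqrt{-\lambda})$ versus $\mathfrak{o}[\sqrt{-\lambda}]$. That reinterpretation of $(\mathfrak{o}^2, q_\ddagger)$ as the norm form on $\mathfrak{o}[\sqrt{-\lambda}]$ is a nice organizing idea (and, in the anisotropic case, would also deliver the missing maximality direction if you proved the equivalence ``lattice maximal $\Leftrightarrow$ order maximal'' and computed when $\mathfrak{o}[\sqrt{-\lambda}]$ is the maximal order), but as written that equivalence is asserted rather than proved, so it cannot yet substitute for the defect computation.
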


\begin{proof}
It is easy to check that $\mathfrak{n}\mathfrak{o}^2 = \mathfrak{o}$, as desired. If $-\lambda$ is a square (that is, $q_\ddagger$ is isotropic), then it is easy to check that $\mathfrak{o}^2$ is not maximal---indeed, the lattice

	\begin{align*}
	\mathfrak{o}\begin{pmatrix} 1/2 \\ 1/2 \end{pmatrix} + \mathfrak{o} \begin{pmatrix} -1/2 \\ 1/2 \end{pmatrix}
	\end{align*}
	
\noindent clearly contains $\mathfrak{o}^2$ and yet also has norm $\mathfrak{o}$. Therefore, we can assume that $q_\ddagger$ is anisotropic. It follows that there is a unique maximal $\mathfrak{o}$-lattice, which is the set of all $x,y$ such that

	\begin{align*}
	q_\ddagger(x,y) \in \mathfrak{o}.
	\end{align*}
	
\noindent It is evident that $\mathfrak{o}^2$ is a sub-lattice---it remains to show that there are non-integral $x,y$ such that $q_\ddagger(x,y) \in \mathfrak{o}$ if and only if $\mathfrak{d}(-\lambda) \neq \mathfrak{p}$.

First, suppose there is such a pair $x,y \in F$. Let $n$ be the smallest integer such that $\pi^n x, \pi^n y \in \mathfrak{o}$---by assumption, $n \geq 1$. Therefore

	\begin{align*}
	(\pi^n x)^2 + \lambda(\pi^n y)^2 \in \mathfrak{p}^2,
	\end{align*}
	
\noindent and since it is easily seen that $y \neq 0$, we arrange to find that

	\begin{align*}
	-\lambda &= (x/y)^2 + \mathfrak{p}^2,
	\end{align*}
	
\noindent which shows $\mathfrak{d}(-\lambda) \neq \mathfrak{p}$. In the other direction, if $\mathfrak{d}(-\lambda) \neq \mathfrak{p}$, then we can find coprime $x,y \in D$ such that

	\begin{align*}
	-\lambda &= (x/y)^2 + \mathfrak{p}^2,
	\end{align*}

\noindent hence

	\begin{align*}
	x^2 + \lambda y^2 \in \mathfrak{p}^2,
	\end{align*}
	
\noindent in which case $(x/\pi, y/\pi) \notin \mathfrak{o}^2$ but do belong to the maximal $\mathfrak{o}$-lattice.
\end{proof}

Consequently, we get different behavior depending on whether $\mathfrak{d}\left(-disc(\ddagger) \cap \mathfrak{o}\right) = \mathfrak{p}$ or not.

\begin{theorem}\label{DyadicDMaximal}
Suppose $A \cong End(V)$ is a split quaternion algebra over $F$, a local field over a dyadic place $\mathfrak{p}$. Suppose that $\mathfrak{d}\left(-disc(\ddagger)\cap \mathfrak{o}\right) = \mathfrak{p}$. Let $\OO$ is a maximal $\ddagger$-order. Then there exists a fractional ideal $\mathfrak{a} \subset F$ and an $\mathfrak{a}$-maximal lattice $\Lambda$ such that

	\begin{align*}
	\OO = End(\Lambda) \cap End(\Lambda^\sharp).
	\end{align*}
	
\noindent Furthermore, all maximal $\ddagger$-orders are isomorphic, and

	\begin{align*}
	disc(\OO) = disc(H) \cap \iota\left(disc(\ddagger)\right).
	\end{align*}
\end{theorem}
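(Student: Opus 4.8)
The plan is to mimic the structure of the proof of Theorem~\ref{NonDyadic} as closely as possible, substituting the correct model lattice and working around the fact that $\mathfrak{o}^2$ may no longer be the universal comparison object. By Lemma~\ref{ExplicitAlgebra} I would normalize so that $H = Mat(2,F)$ and $\ddagger = \ddagger_\lambda$, where $\lambda$ is integral and square-free and generates $\iota(disc(\ddagger))$. The hypothesis $\mathfrak{d}(-disc(\ddagger)\cap\mathfrak{o}) = \mathfrak{p}$ is exactly the condition of Lemma~\ref{DefectOfLattice}, so $\mathfrak{o}^2$ is $\mathfrak{o}$-maximal for $q_\ddagger(x,y) = x^2 + \lambda y^2$; hence its associated $\ddagger$-order $Mat(2,\mathfrak{o}) \cap Mat(2,\mathfrak{o})^\ddagger$ is a genuine candidate for the (unique up to isomorphism) maximal $\ddagger$-order, and one checks directly from the explicit description in the two cases $\iota(disc(\ddagger)) = \mathfrak{p}$ or $\mathfrak{o}$ that it has discriminant $disc(H) \cap \iota(disc(\ddagger))$. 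By Lemma~\ref{MaximalOrdersPreserved} it then suffices to show every maximal $\ddagger$-order is $PGO$-conjugate to this one.

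Next I would invoke Lemmas~\ref{EichlerSurjection} and~\ref{DescriptionOfIsomorphisms} to reduce, exactly as in Theorem~\ref{NonDyadic}, to the following statement: for every lattice $\Lambda \subset F^2$ there is some $g \in GO(Mat(2,F),\ddagger_\lambda)$ with
\begin{align*}
End(\Lambda) \cap End(\Lambda^\sharp) \subset g\left(Mat(2,\mathfrak{o}) \cap Mat(2,\mathfrak{o})^\ddagger\right) g^{-1}.
\end{align*}
If $\Lambda$ happened to have an orthogonal basis, this would be immediate from Lemma~\ref{TechnicalLemma}. The difficulty is precisely that in the dyadic case an arbitrary binary lattice need \emph{not} admit an orthogonal basis---it may instead be (up to scaling) a $\mathfrak{p}^k$-modular plane of the hyperbolic or the ``strange'' anisotropic type. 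So the remaining work splits into handling those non-orthogonally-decomposable binary lattices.

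For such a lattice I would argue that $End(\Lambda)\cap End(\Lambda^\sharp)$ is already contained in a conjugate of the model order, using the following observations. First, scaling $\Lambda$ does not change $End(\Lambda)$ or $\Lambda^\sharp$ up to the relevant normalization, so I may assume $\mathfrak{s}\Lambda \subset \mathfrak{o}$ with $\mathfrak{s}\Lambda$ as large as possible. Second, a binary lattice with no orthogonal basis is, over a dyadic local field, $\mathfrak{a}$-modular for some $\mathfrak{a}$ (this is the standard Jordan-splitting dichotomy in rank two: each Jordan block is either unary or a binary modular block with no orthogonal splitting). For an $\mathfrak{a}$-modular lattice $\Lambda$ one has $\Lambda^\sharp = \mathfrak{a}^{-1}\Lambda$, so $End(\Lambda) = End(\Lambda^\sharp)$ and hence $End(\Lambda)\cap End(\Lambda^\sharp) = End(\Lambda)$ is a \emph{maximal} order. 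The key point is then that a maximal order $End(\Lambda)$ is a maximal $\ddagger$-order only if $\Lambda$ is itself $\ddagger$-compatible in the sense that $End(\Lambda)^\ddagger = End(\Lambda)$, i.e. $\Lambda^\sharp = \mathfrak{a}^{-1}\Lambda$; combining this with $\ddagger$-maximality and the hypothesis $\mathfrak{d}(-\lambda)=\mathfrak{p}$, which forces the binary anisotropic form $x^2+\lambda y^2$ to have $\mathfrak{o}^2$ maximal, one concludes that after rescaling and conjugating by a suitable $g\in GO(Mat(2,F),\ddagger_\lambda)$ the lattice $\Lambda$ coincides with $\mathfrak{o}^2$, so $End(\Lambda)\cap End(\Lambda^\sharp)$ is conjugate to $Mat(2,\mathfrak{o})\cap Mat(2,\mathfrak{o})^\ddagger$. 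Finally, if $\Lambda$ is not modular but decomposes as an orthogonal sum of two unary lattices, Lemma~\ref{TechnicalLemma} applies verbatim. Putting these cases together gives the required containment for every $\Lambda$, and Lemma~\ref{MaximalOrdersPreserved} finishes the proof.

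The main obstacle I anticipate is the classification step just described: showing that a $\ddagger$-order of the form $End(\Lambda)\cap End(\Lambda^\sharp)$ coming from a non-orthogonally-decomposable binary lattice cannot be a maximal $\ddagger$-order strictly larger than (a conjugate of) the model order—equivalently, ruling out that the hyperbolic plane or a proper $\mathfrak{p}$-modular anisotropic plane yields something new. This is where the dyadic hypothesis $\mathfrak{d}(-disc(\ddagger)\cap\mathfrak{o})=\mathfrak{p}$ does real work, via Lemma~\ref{DefectOfLattice}, and it is the place where the argument genuinely diverges from the non-dyadic proof rather than just paralleling it.
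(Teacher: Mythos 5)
Your overall architecture (reduce to $Mat(2,F)$ with $\ddagger_\lambda$ via Lemma \ref{ExplicitAlgebra}, use Lemma \ref{DefectOfLattice} to see that $\mathfrak{o}^2$ is $\mathfrak{o}$-maximal, and reduce via Lemmas \ref{EichlerSurjection}, \ref{DescriptionOfIsomorphisms} and \ref{TechnicalLemma} to lattices admitting an orthogonal basis) matches the paper. The gap is in your treatment of a lattice $\Lambda$ with no orthogonal basis. You correctly observe that such a $\Lambda$ is $\mathfrak{a}$-modular, so that $End(\Lambda)\cap End(\Lambda^\sharp)=End(\Lambda)$ is a \emph{maximal} order; but you then assert that $\ddagger$-maximality together with the hypothesis forces $\Lambda$ to coincide with $\mathfrak{o}^2$ after rescaling and conjugating by some $g\in GO(Mat(2,F),\ddagger_\lambda)$. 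That conclusion is untenable: elements of $GO$ act on lattices as similarities of $q_\ddagger$, hence preserve the property of admitting an orthogonal basis, and $\mathfrak{o}^2$ has one while $\Lambda$ by assumption does not. Worse, in the case $\iota(disc(\ddagger))=\mathfrak{p}$ the model order $Mat(2,\mathfrak{o})\cap Mat(2,\mathfrak{o})^\ddagger$ is a proper Eichler order of level $\mathfrak{p}$ and cannot contain the maximal order $End(\Lambda)$ at all. So your plan for this case can only succeed if the case is vacuous, and proving that it is vacuous is precisely the content your proposal does not supply.

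The paper closes exactly this gap by showing that when $\mathfrak{d}(-\lambda)=\mathfrak{p}$ \emph{every} binary lattice has an orthogonal basis, so that Lemma \ref{TechnicalLemma} applies across the board and the non-orthogonal case never arises. The mechanism: a binary lattice has an orthogonal basis whenever $\mathfrak{n}\Lambda=\mathfrak{s}\Lambda$, and $\mathfrak{n}\Lambda\neq\mathfrak{s}\Lambda$ forces the off-diagonal Gram entry $\lambda ab+cd$ to have strictly smaller valuation than both diagonal entries $\lambda a^2+c^2$ and $\lambda b^2+d^2$. The identity
\begin{align*}
\left(\lambda a^2+c^2\right)\left(\lambda b^2+d^2\right) = \left(\lambda ab+cd\right)^2+\lambda\left(ad-bc\right)^2
\end{align*}
then shows that $-\lambda$ times a square lies in $1+\mathfrak{p}^2$, so its quadratic defect satisfies $\mathfrak{d}(-\lambda)\subseteq\mathfrak{p}^2\neq\mathfrak{p}$, contradicting the hypothesis. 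This valuation computation is the one genuinely new ingredient relative to Theorem \ref{NonDyadic}, and it is the step missing from your argument; you correctly flagged it as the main obstacle, but the resolution you sketch does not overcome it.
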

	
\begin{proof}
The proof is very similar to the proof of Theorem \ref{NonDyadic}. We note that we can reduce to the special case $End(V) = Mat(2,F)$, $\ddagger = \ddagger_\lambda$, where $\lambda \mathfrak{o} = \mathfrak{o}$ or $\mathfrak{p}$. By Lemma \ref{DefectOfLattice}, $\mathfrak{o}^2$ is a maximal lattice. Ergo, if we prove that every maximal $\ddagger$-order $\OO$ of $Mat(2,F)$ is of the form

	\begin{align*}
	\OO = End(\Lambda) \cap End(\Lambda^\sharp),
	\end{align*}
	
\noindent where $\Lambda$ is a lattice in $F^2$ with orthogonal base, we will be done.

Over a local field, if $\mathfrak{n}\Lambda = \mathfrak{s}\Lambda$, there is an explicit algorithm to construct an orthogonal base of $\Lambda$. On the other hand, if $\Lambda = \mathfrak{o} e_1 + \mathfrak{o} e_2$, then

	\begin{align*}
	\mathfrak{s}\Lambda &= \sum_{i,j} q_\ddagger(e_i, e_j) \mathfrak{o} \\
	\mathfrak{n}\Lambda &= \sum_i q_\ddagger(e_i)\mathfrak{o} + 2 \mathfrak{s}\Lambda.
	\end{align*}

\noindent Let

	\begin{align*}
	\Lambda &= \mathfrak{o}\begin{pmatrix} a \\ c \end{pmatrix} + \mathfrak{o}\begin{pmatrix}b \\ d \end{pmatrix},
	\end{align*}
	
\noindent so that the Gram matrix is
	
	\begin{align*}
	G &= \begin{pmatrix} a & b \\ c & d \end{pmatrix}^T \begin{pmatrix} \lambda & 0 \\ 0 & 1 \end{pmatrix} \begin{pmatrix} a & b \\ c & d \end{pmatrix} \\
	&= \begin{pmatrix} \lambda a^2 + c^2 & \lambda ab + cd \\ \lambda ab + cd & \lambda b^2 + \mathfrak{o}^2 \end{pmatrix}.
	\end{align*}
	
\noindent Thus, the only way that $\mathfrak{n}\Lambda \neq \mathfrak{s}\Lambda$ is if

	\begin{align*}
	\min\{ord_\mathfrak{p}(\lambda a^2 + c^2),ord_\mathfrak{p}(\lambda b^2 + \mathfrak{o}^2)\} > ord_\mathfrak{p}(\lambda ab + cd).
	\end{align*}
	
\noindent However,

	\begin{align*}
	\left(\lambda a^2 + c^2\right)\left(\lambda b^2 + \mathfrak{o}^2\right) &= \left(\lambda ab + cd\right)^2 + \lambda\left(ad - bc\right)^2,
	\end{align*}
	
\noindent hence

	\begin{align*}
	-\lambda &= \lambda^2\left(ad - bc\right)^2 + \frac{\lambda\left(\lambda a^2 + c^2\right)\left(\lambda b^2 + \mathfrak{o}^2\right)}{\left(\lambda ab + cd\right)^2} \\
	&= \lambda^2\left(ad - bc\right)^2 + \mathfrak{p}^2,
	\end{align*}
	
\noindent which is impossible since we are given that $\mathfrak{d}\left(-\lambda\right) = \mathfrak{p}$. Therefore, $\Lambda$ has an orthogonal base, and we are done.
\end{proof}

If $\mathfrak{d}\left(-disc(\ddagger) \cap \mathfrak{o}\right) \neq \mathfrak{p}$, we shall show in the next section that there are always at least two isomorphism classes of maximal $\ddagger$-orders. For now, we give a weaker result.

\begin{theorem}\label{DyadicDNotMaximal}
Suppose $A \cong End(V)$ is a split quaternion algebra over $F$, a local field over a dyadic place $\mathfrak{p}$. Suppose that $\mathfrak{d}\left(-disc(\ddagger)\cap \mathfrak{o}\right) \neq \mathfrak{p}$. Let $\OO$ is a maximal $\ddagger$-order. Then there exists a fractional ideal $\mathfrak{a} \subset F$ and an $\mathfrak{a}$-modular lattice $\Lambda$ such that

	\begin{align*}
	\OO = End(\Lambda) \cap End(\Lambda^\sharp).
	\end{align*}
	
\noindent Additionally,

	\begin{align*}
	disc(\OO) = disc(H) \cap \iota(disc(\ddagger)) = \mathfrak{o}.
	\end{align*}
\end{theorem}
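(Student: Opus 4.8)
The plan is to follow the template of Theorems \ref{NonDyadic} and \ref{DyadicDMaximal}, but with ``modular lattice'' in place of ``lattice with an orthogonal basis'', exploiting that the present hypothesis forces $\iota(disc(\ddagger)) = \mathfrak{o}$.

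I would begin with the discriminant bookkeeping. If $\lambda$ is a square-free integral representative of $disc(\ddagger)$ with $ord_\mathfrak{p}(\lambda) = 1$, then $\mathfrak{d}(-\lambda) = \mathfrak{p}$ (taking $\xi = 0$ in $\bigcap_{\xi}(-\lambda - \xi^2)\mathfrak{o}$ already gives $\mathfrak{p}$, and no $\xi$ yields a smaller ideal), which contradicts the hypothesis; hence $\lambda \in \mathfrak{o}^\times$, i.e.\ $\iota(disc(\ddagger)) = \mathfrak{o}$. Since $H$ is split, $disc(H) = \mathfrak{o}$, so $disc(H) \cap \iota(disc(\ddagger)) = \mathfrak{o}$. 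By Lemma \ref{ExplicitAlgebra} I may then take $H = Mat(2,F)$ and $\ddagger = \ddagger_\lambda$ with $\lambda \in \mathfrak{o}^\times$; note in particular that $Mat(2,\mathfrak{o})$ is stable under $\ddagger_\lambda$, so it is itself a $\ddagger$-order.

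The heart of the argument is to pin down which lattices produce maximal $\ddagger$-orders. Let $\OO$ be a maximal $\ddagger$-order; by the remarks of the introduction together with Lemma \ref{EichlerSurjection} we may write $\OO = End(\Lambda) \cap End(\Lambda^\sharp)$ for some lattice $\Lambda \subset F^2$. I claim $\Lambda$ is modular up to scaling. A binary lattice over a local field that is not modular up to scaling has a Jordan decomposition into two rank-one components, hence an orthogonal basis whose two scales differ; then, by the explicit computation carried out in the proof of Lemma \ref{TechnicalLemma} specialized to $\iota(disc(\ddagger)) = \mathfrak{o}$, conjugating $\OO$ by a suitable element of $GO(Mat(2,F),\ddagger_\lambda)$ turns it into
	\begin{align*}
	\left\{\begin{pmatrix} a & b\pi^{m} \\ c\pi^{m} & d\end{pmatrix} \middle| a,b,c,d \in \mathfrak{o}\right\}
	\end{align*}
for some integer $m \geq 0$, and $m = 0$ occurs exactly when the two scales coincide, i.e.\ when $\Lambda$ is modular up to scaling. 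So if $\Lambda$ is not modular up to scaling, then $m \geq 1$, and the order displayed above is properly contained in the $\ddagger$-order $Mat(2,\mathfrak{o})$, contradicting the maximality of $\OO$. Therefore $\Lambda$ is modular up to scaling, and rescaling $\Lambda$ (which changes neither $End(\Lambda)$ nor $\OO$) we may take $\Lambda$ to be $\mathfrak{a}$-modular for a suitable fractional ideal $\mathfrak{a}$.

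The discriminant computation is then immediate: since $\Lambda$ is $\mathfrak{a}$-modular, $\Lambda^\sharp = \mathfrak{a}^{-1}\Lambda$, so $End(\Lambda^\sharp) = End(\Lambda)$ and hence $\OO = End(\Lambda)$ is a maximal order; thus $disc(\OO) = \mathfrak{o} = disc(H) \cap \iota(disc(\ddagger))$. The step I expect to demand the most care is the claim that a non-modular $\Lambda$ cannot yield a maximal $\ddagger$-order: this rests on the normal form for binary lattices (the dichotomy between being modular up to scaling and admitting an orthogonal basis) and on reading off the precise shape of the conjugated order from the proof of Lemma \ref{TechnicalLemma}. Once $\iota(disc(\ddagger)) = \mathfrak{o}$ is secured, the rest is routine.
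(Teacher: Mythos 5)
Your argument is correct and follows essentially the same route as the paper: force $\iota(disc(\ddagger)) = \mathfrak{o}$ from the defect hypothesis, invoke the Jordan-splitting dichotomy (modular versus orthogonal basis), and use the explicit computation in Lemma \ref{TechnicalLemma} to dispose of the orthogonal-basis case, after which $\Lambda = \mathfrak{a}\Lambda^\sharp$ gives $\OO = End(\Lambda)$ and the discriminant claim. Your contradiction argument in the orthogonal-basis branch (that $m \geq 1$ would embed a conjugate of $\OO$ properly inside the $\ddagger$-order $Mat(2,\mathfrak{o})$) is in fact slightly more explicit than the paper's, which simply notes that $\OO$ must then be isomorphic to $Mat(2,\mathfrak{o}) \cap Mat(2,\mathfrak{o})^\ddagger = End(\mathfrak{o}^2)$ with $\mathfrak{o}^2$ unimodular.
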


\begin{proof}
Since $\mathfrak{d}\left(-disc(\ddagger)\cap \mathfrak{o}\right) \neq \mathfrak{p}$, we conclude that $\iota\left(disc(\ddagger)\right) = \mathfrak{o}$---otherwise, there would be an element $\lambda \in disc(\ddagger)$ such that $ord_\mathfrak{p}\left(-\lambda\right) = 1$, which would imply that $\mathfrak{d}\left(-disc(\ddagger)\cap \mathfrak{o}\right) = \mathfrak{p}$.

By Lemma \ref{ExplicitAlgebra}, we can assume that $H = Mat(2,F)$ and $\ddagger = \ddagger_\lambda$. Since $\iota\left(disc(\ddagger)\right) = \mathfrak{o}$, $\mathfrak{o}^2$ is a unimodular lattice. Choose any maximal $\ddagger$-order $\OO = End(\Lambda) \cap End(\Lambda^\sharp)$.

By the theory of Jordan splittings, $\Lambda$ is either modular or it has an orthogonal base. However, if it has an orthogonal base, then we know by \ref{TechnicalLemma} that $\OO$ is isomorphic to $Mat(2,\mathfrak{o}) \cap Mat(2,\mathfrak{o})^\ddagger$---since $\mathfrak{o}^2$ is unimodular, we see that we can always take $\Lambda$ to be modular.

Since $\Lambda$ is $\mathfrak{a}$-modular for some fractional ideal $\mathfrak{a}$, $\Lambda = \mathfrak{a}\Lambda^\sharp$. This proves that $\OO = End(\Lambda)$, hence its discriminant is $\mathfrak{o}$.
\end{proof}

\section{Counting Orders over Local Fields:}

In this section, we shall give methods for precisely counting the number of maximal $\ddagger$-orders in a split quaternion algebra over a local field, as well as methods of counting the number of isomorphism classes. To start, we shall need the following lemma.

\begin{lemma}\label{StabilizerLemma}
Let $F$ be a local field. Then the stabilizer of $Mat(2,\mathfrak{o})$ in $GL(2,F)$ is

	\begin{align*}
	GL^0(2,D) &= \ker\left(GL(2,F) \stackrel{\det}{\rightarrow} F^\times \rightarrow F^\times / \left(F^\times\right)^2\right) \cap Mat(2,\mathfrak{o})
	\end{align*}
\end{lemma}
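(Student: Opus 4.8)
The plan is to identify the stabilizer of $Mat(2,\mathfrak{o})$ under the conjugation action of $GL(2,F)$ by a double-containment argument, computing explicitly which matrices $g \in GL(2,F)$ satisfy $g\, Mat(2,\mathfrak{o})\, g^{-1} = Mat(2,\mathfrak{o})$. First I would recall that $Mat(2,\mathfrak{o}) = End(\mathfrak{o}^2)$, and that over a local field every maximal order of $Mat(2,F)$ is of the form $End(\Lambda)$ for a lattice $\Lambda$ (as used in the proof of Lemma \ref{EichlerSurjection}); moreover $g\, End(\Lambda)\, g^{-1} = End(g\Lambda)$. Hence $g$ stabilizes $Mat(2,\mathfrak{o})$ if and only if $End(g\mathfrak{o}^2) = End(\mathfrak{o}^2)$, which holds if and only if $g\mathfrak{o}^2 = c\,\mathfrak{o}^2$ for some $c \in F^\times$ (two lattices give the same endomorphism order precisely when they differ by a scalar, since $\mathfrak{o}$ is a PID and the lattice can be recovered up to homothety from $End(\Lambda)$ acting on $F^2$).

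Next I would translate the condition $g\mathfrak{o}^2 = c\,\mathfrak{o}^2$ into matrix terms: it says $c^{-1} g \in GL(2,\mathfrak{o})$, i.e.\ $g = c\,u$ with $u \in GL(2,\mathfrak{o})$ and $c \in F^\times$. Writing $c = \pi^k v$ with $v \in \mathfrak{o}^\times$ and absorbing $v$ into $u$, we may take $g = \pi^k u$ with $u \in GL(2,\mathfrak{o})$. Now conjugation is insensitive to scalars, so the image of the stabilizer in $PGL(2,F)$ is exactly the image of $GL(2,\mathfrak{o})$; the question is which representatives $g$ one gets. The claim to prove is that the stabilizer equals $GL^0(2,D)$, the matrices in $Mat(2,\mathfrak{o})$ whose determinant is a square in $F^\times/(F^\times)^2$. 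So I would check: (i) every $g$ in the stabilizer, after the reduction $g = \pi^k u$, can be rescaled by the scalar $\pi^{-k}$ when $\det u$ lands us in the right square class — more precisely, $\det(\pi^k u) = \pi^{2k}\det(u)$, so $\det g \equiv \det u \pmod{(F^\times)^2}$, and since $\det u \in \mathfrak{o}^\times$ we want to see that replacing $g$ by an equivalent representative we can always land in $Mat(2,\mathfrak{o})$ with $\det \in (F^\times)^2$; and (ii) conversely, every $g \in Mat(2,\mathfrak{o})$ with square determinant does stabilize $Mat(2,\mathfrak{o})$.

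For direction (ii): if $g \in Mat(2,\mathfrak{o})$ with $\det g = w^2$ for some $w \in F^\times$, I claim $w \in \mathfrak{o}^\times$ up to the obvious ambiguity — actually $\det g \in \mathfrak{o} \cap (F^\times)^2$ forces $\mathrm{ord}_\mathfrak{p}(\det g)$ even and nonnegative; after dividing $g$ by the scalar $\pi^{\mathrm{ord}_\mathfrak{p}(\det g)/2}$ we may assume $\det g \in \mathfrak{o}^\times$, whence $g \in GL(2,\mathfrak{o})$ and clearly $g\,Mat(2,\mathfrak{o})\,g^{-1} = Mat(2,\mathfrak{o})$. For direction (i): given $g$ in the stabilizer, write $g = \pi^k u$ as above; then $g' := \pi^{-k}g = u \in GL(2,\mathfrak{o}) \subset Mat(2,\mathfrak{o})$ represents the same element of $PGL(2,F)$ and has $\det u \in \mathfrak{o}^\times$. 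It is not automatic that $\det u \in (F^\times)^2$, but $\det u \in \mathfrak{o}^\times$; and the subtlety is that $g$ itself (before rescaling) need not lie in $Mat(2,\mathfrak{o})$ at all. I expect \textbf{the main obstacle} to be pinning down exactly which representative in each $PGL$-class one is meant to take, i.e.\ matching the author's convention that $GL^0(2,D)$ consists of honest matrices with entries in $\mathfrak{o}$ and square determinant — equivalently showing the natural map $GL^0(2,D) \to PGL(2,F)$ has image exactly $\mathrm{Stab}_{PGL}(Mat(2,\mathfrak{o})) = PGL(2,\mathfrak{o})$ and is injective up to $\{\pm 1\}$-type scalars, so that $GL^0(2,D)$ is the canonical lift. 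This is a bookkeeping point about scalars and square classes rather than a deep one, and once the homothety characterization of $End(\Lambda)$ is in hand the rest is routine.
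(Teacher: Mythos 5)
Your route is genuinely different from the paper's. The paper computes directly: it conjugates the elementary matrices $\left(\begin{smallmatrix}0&1\\0&0\end{smallmatrix}\right)$ and $\left(\begin{smallmatrix}0&0\\1&0\end{smallmatrix}\right)$ by $\gamma=\left(\begin{smallmatrix}a&b\\c&d\end{smallmatrix}\right)$ to force $a^2,b^2,c^2,d^2\in\det(\gamma)\mathfrak{o}$, normalizes $\det(\gamma)\mathfrak{o}$ to $\mathfrak{o}$ or $\mathfrak{p}$ by rescaling, rules out $\mathfrak{p}$ by the parity of $\mathrm{ord}_\mathfrak{p}(ad-bc)$, and concludes $\gamma\in F^\times\cdot GL(2,\mathfrak{o})$. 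You instead reduce everything to the assertion that $End(\Lambda_1)=End(\Lambda_2)$ if and only if $\Lambda_1,\Lambda_2$ are homothetic. That assertion is exactly Corollary \ref{MaximalUniqueness}, which the paper \emph{deduces from} this lemma, so within the paper's logical structure your argument is circular; the parenthetical justification you offer (``the lattice can be recovered up to homothety from $End(\Lambda)$'') is precisely the content that needs proving and is the entire substance of the lemma. It can be proved independently --- e.g.\ show that any $Mat(2,\mathfrak{o})$-stable lattice $L$ equals $\sum_{v\in L}Mat(2,\mathfrak{o})\,v$ and that $Mat(2,\mathfrak{o})\,v=\pi^k\mathfrak{o}^2$ when $v=\pi^k w$ with $w$ primitive --- but you must actually supply that argument; as written it is the missing idea, not a citable fact.

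Second, the issue you flag at the end as ``bookkeeping'' is not bookkeeping, and your proof cannot close because of it. The stabilizer you correctly compute, $F^\times\cdot GL(2,\mathfrak{o})$, is genuinely not equal, as a subset of $GL(2,F)$, to the displayed set $\ker(\cdots)\cap Mat(2,\mathfrak{o})$: the matrix $\mathrm{diag}(\pi^2,1)$ has square determinant and integral entries but conjugates $\left(\begin{smallmatrix}0&0\\1&0\end{smallmatrix}\right)$ to $\left(\begin{smallmatrix}0&0\\\pi^{-2}&0\end{smallmatrix}\right)\notin Mat(2,\mathfrak{o})$, while $\mathrm{diag}(\epsilon,1)$ with $\epsilon$ a nonsquare unit stabilizes $Mat(2,\mathfrak{o})$ but has nonsquare determinant. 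No choice of representatives reconciles the two sets; the statement has to be read (as the paper itself implicitly does when it invokes $GL^0$ in Corollary \ref{MaximalUniqueness}) as saying the stabilizer is $F^\times\cdot GL(2,\mathfrak{o})$, and the paper's own final line $\pi^nGL(2,\mathfrak{o})\subset GL^0(2,\mathfrak{o})$ suffers from the same defect. You were right to sense trouble there, but wrong to defer it as a routine matter of conventions: the equality you are trying to land on does not hold literally, so the honest endpoint of your (or any) proof is the homothety description of the stabilizer, together with a corrected reading of $GL^0$.
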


\begin{proof}
Suppose that $\gamma = \left(\begin{smallmatrix} a & b \\ c & d \end{smallmatrix}\right) \in Stab\left(Mat(2,\mathfrak{o})\right)$. Then

	\begin{align*}
	\begin{pmatrix} a & b \\ c & d \end{pmatrix} \begin{pmatrix} 0 & 1 \\ 0 & 0 \end{pmatrix} \begin{pmatrix} a & b \\ c & d \end{pmatrix}^{-1} &\in Mat(2,\mathfrak{o}), \\
	\begin{pmatrix} a & b \\ c & d \end{pmatrix} \begin{pmatrix} 0 & 0 \\ 1 & 0 \end{pmatrix} \begin{pmatrix} a & b \\ c & d \end{pmatrix}^{-1} &\in Mat(2,\mathfrak{o}),
	\end{align*}
	
\noindent and therefore

	\begin{align*}
	a^2, b^2, c^2, \mathfrak{o}^2 &\in \det(\gamma) \mathfrak{o}.
	\end{align*}
	
\noindent If $\gamma \in Stab\left(Mat(2,\mathfrak{o})\right)$, then so is $\lambda \gamma$. Therefore, we are free to assume that either $\det(\gamma)\mathfrak{o} = \mathfrak{o}$ or $\mathfrak{p}$. Suppose first that it is $\mathfrak{p}$. Then	$a,b,c,d \in \mathfrak{p}$, hence $ad, bc \in \mathfrak{p}^2$. However, this implies $ad - bc = \det(\gamma) \in \mathfrak{p}^2$, which is a contradiction.

Therefore, we are free to assume that $\det(\gamma) \in \mathfrak{o}^\times$, which proves that $a,b,c,d \in \mathfrak{o}$. What we have shown is that

	\begin{align*}
	\gamma \in \begin{pmatrix} \pi^n & 0 \\ 0 & \pi^n \end{pmatrix} GL(2,\mathfrak{o}) \subset GL^0(2,\mathfrak{o}).
	\end{align*}
	
\noindent It is easy to see that $GL^0(2,\mathfrak{o}) \subset Stab\left(Mat(2,\mathfrak{o})\right)$, so we are done.
\end{proof}

\begin{corollary}\label{MaximalUniqueness}
Given two lattices $\Lambda_1, \Lambda_2 \subset V$, $End(\Lambda_1) = End(\Lambda_2)$ if and only if $\Lambda_1 = \lambda\Lambda_2$ for some $\lambda \in F^\times$.
\end{corollary}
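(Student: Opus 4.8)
The ``if'' direction is immediate, since (as already noted) scaling a lattice does not change its endomorphism order. So the plan is to establish the ``only if'' direction by reducing it to Lemma \ref{StabilizerLemma}.

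First I would fix an $F$-basis of $V$, identifying $V$ with $F^2$, chosen so that $\Lambda_1 = \mathfrak{o}^2$; this is legitimate because $\mathfrak{o}$ is a PID over a local field, so $\Lambda_1$ is $\mathfrak{o}$-free of rank $2$ and any $\mathfrak{o}$-basis of it is simultaneously an $F$-basis of $V$. With this identification $End(\Lambda_1) = Mat(2,\mathfrak{o})$. Likewise $\Lambda_2$ is $\mathfrak{o}$-free of rank $2$, so there is some $g \in GL(2,F)$ with $\Lambda_2 = g\,\mathfrak{o}^2$, and then a direct computation gives
\[
End(\Lambda_2) = \{\sigma \in End(V) \mid g^{-1}\sigma g \in Mat(2,\mathfrak{o})\} = g\, Mat(2,\mathfrak{o})\, g^{-1}.
\]

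Next I would observe that the hypothesis $End(\Lambda_1) = End(\Lambda_2)$ now reads $g\, Mat(2,\mathfrak{o})\, g^{-1} = Mat(2,\mathfrak{o})$, i.e.\ $g$ belongs to the stabilizer of $Mat(2,\mathfrak{o})$ under the conjugation action of $GL(2,F)$. By Lemma \ref{StabilizerLemma}---or, more precisely, by the explicit description obtained in the course of its proof---this forces $g = \pi^n h$ for some $n \in \ZZ$ and some $h \in GL(2,\mathfrak{o})$. Hence $\Lambda_2 = g\,\mathfrak{o}^2 = \pi^n h\,\mathfrak{o}^2 = \pi^n\,\mathfrak{o}^2 = \pi^n \Lambda_1$, so that $\Lambda_1 = \pi^{-n}\Lambda_2$, which is the assertion with $\lambda = \pi^{-n}$.

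I do not expect a genuine obstacle here: the only points needing a little care are the two reductions---writing $\Lambda_1 = \mathfrak{o}^2$ and $\Lambda_2 = g\,\mathfrak{o}^2$, which use that $\mathfrak{o}$ is a PID, and recognizing that the identity $g\, Mat(2,\mathfrak{o})\, g^{-1} = Mat(2,\mathfrak{o})$ is exactly the stabilizer condition of Lemma \ref{StabilizerLemma}---after which the conclusion is simply read off from that lemma's proof.
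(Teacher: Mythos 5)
Your argument is correct and is essentially the paper's own proof: both write $End(\Lambda_i)$ as a conjugate of $Mat(2,\mathfrak{o})$ by a basis matrix, invoke Lemma \ref{StabilizerLemma} (really the decomposition of the stabilizer as $F^\times \cdot GL(2,\mathfrak{o})$ obtained in its proof) to identify when the two conjugates coincide, and then absorb the $GL(2,\mathfrak{o})$ factor as a change of basis. Your normalization $\Lambda_1 = \mathfrak{o}^2$ is only a cosmetic difference from the paper's use of two basis matrices $L_1, L_2$.
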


\begin{proof}
We showed previously that if $L_1, L_2$ are matrices corresponding to bases of $\Lambda_1, \Lambda_2$, then

	\begin{align*}
	End(\Lambda_1) &= L_1 Mat(2,\mathfrak{o}) L_1^{-1} \\
	End(\Lambda_2) &= L_2 Mat(2,\mathfrak{o}) L_2^{-1},
	\end{align*}
	
\noindent and therefore
	
	\begin{align*}
	End(\Lambda_1) = End(\Lambda_2) &\Leftrightarrow L_1^{-1} L_2 \in GL^0(2,\mathfrak{o}) \\
	&\Leftrightarrow L_2 = L_1 \gamma \ \left(\text{for some } \gamma \in GL^0(2,\mathfrak{o})\right) \\
	&\Leftrightarrow L_2 = \lambda L_1 \gamma \ \left(\text{for some } \gamma \in GL(2,\mathfrak{o}), \ \lambda \in F^\times\right).
	\end{align*}
	
\noindent However, multiplying $L_1$ by an element of $GL_2(\mathfrak{o})$ does not change the lattice $\Lambda_1$ (this amounts to a change of basis), hence the result follows.
\end{proof}

\begin{corollary}\label{EichlerUniqueness}
Given two Eichler orders

	\begin{align*}
	\OO_1 &= End(\Lambda_1) \cap End(\Lambda_1^\sharp) \\
	\OO_2 &= End(\Lambda_2) \cap End(\Lambda_2^\sharp),
	\end{align*}
	
\noindent $\OO_1 = \OO_2$ if and only if $\Lambda_1 = \lambda \Lambda_2$ or $\Lambda_1 = \lambda \Lambda_2^\sharp$ (for some $\lambda \in F^\times$).
\end{corollary}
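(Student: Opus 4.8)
The plan is to leverage Corollary \ref{MaximalUniqueness}, which already tells us exactly when $End(\Lambda_1) = End(\Lambda_2)$, together with the fact that an Eichler order of the form $\OO \cap \OO^\ddagger$ "remembers" the pair of maximal orders it came from. First I would establish the backward direction, which is easy: if $\Lambda_1 = \lambda\Lambda_2$ then $\Lambda_1^\sharp = \lambda^{-1}\Lambda_2^\sharp$ (scaling the lattice by $\lambda$ scales the dual by $\lambda^{-1}$ since $b_\ddagger$ is bilinear), and since scaling a lattice does not change its endomorphism order, we get $End(\Lambda_1) = End(\Lambda_2)$ and $End(\Lambda_1^\sharp) = End(\Lambda_2^\sharp)$, hence $\OO_1 = \OO_2$; the case $\Lambda_1 = \lambda\Lambda_2^\sharp$ is identical after noting $\Lambda_1^\sharp = \lambda^{-1}\Lambda_2^{\sharp\sharp} = \lambda^{-1}\Lambda_2$ (the double dual of a lattice is the lattice itself, since $b_\ddagger$ is non-singular).

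For the forward direction, suppose $\OO_1 = \OO_2$. The key structural fact is that a maximal order $End(\Lambda)$ is recovered from the Eichler order $\OO = End(\Lambda) \cap End(\Lambda^\sharp)$ as one of the (at most two) maximal orders of $End(V)$ containing $\OO$; moreover, because $\OO = \OO_1$ is stable under $\ddagger$, the involution $\ddagger$ acts on the set of maximal orders containing $\OO$, and it swaps $End(\Lambda)$ with $End(\Lambda)^\ddagger = End(\Lambda^\sharp)$ (using the computation from Lemma \ref{EichlerSurjection}). So from $\OO_1 = \OO_2$ we deduce that the unordered pair $\{End(\Lambda_1), End(\Lambda_1^\sharp)\}$ equals the unordered pair $\{End(\Lambda_2), End(\Lambda_2^\sharp)\}$. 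This gives two cases: either $End(\Lambda_1) = End(\Lambda_2)$ (and simultaneously $End(\Lambda_1^\sharp) = End(\Lambda_2^\sharp)$), or $End(\Lambda_1) = End(\Lambda_2^\sharp)$ (and $End(\Lambda_1^\sharp) = End(\Lambda_2)$). Applying Corollary \ref{MaximalUniqueness} to the first equation in each case yields $\Lambda_1 = \lambda\Lambda_2$ or $\Lambda_1 = \lambda\Lambda_2^\sharp$ respectively, as claimed.

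I expect the main obstacle to be justifying cleanly that the maximal orders containing $\OO$ are exactly $End(\Lambda)$ and $End(\Lambda^\sharp)$ and no others — i.e. that the Eichler order determines this unordered pair. One has to be a little careful in the degenerate case where $End(\Lambda) = End(\Lambda^\sharp)$ (i.e. $\Lambda$ is modular, so $\OO$ is itself maximal): there the "pair" collapses to a single maximal order, but the argument still goes through since then $\OO_1 = \OO_2$ forces $End(\Lambda_1) = End(\Lambda_2)$ directly and Corollary \ref{MaximalUniqueness} applies. The non-degenerate case is the standard fact that an Eichler order of level $\neq \mathfrak{o}$ in a split quaternion algebra over a local field is contained in precisely two maximal orders, which one can see concretely from the normal form $End(\Lambda) \cap End(\Lambda^\sharp)$ computed in Lemma \ref{TechnicalLemma}; alternatively it follows from the Bruhat–Tits tree description of maximal orders. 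Once that is in hand, the rest is bookkeeping with Corollary \ref{MaximalUniqueness}.
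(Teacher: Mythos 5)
Your argument is exactly the paper's: the paper's proof is a one-line appeal to Corollary \ref{MaximalUniqueness} together with the fact that, over a local field, an Eichler order uniquely determines the (unordered pair of) maximal orders of which it is the intersection. You have simply spelled out the same two ingredients in more detail, including the easy scaling direction and the degenerate modular case, so the proposal is correct and follows the paper's route.
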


\begin{proof}
This follows immediately from the preceding corollary and the observation that, over a local field, the maximal orders of which an Eichler order is an intersection are uniquely determined.
\end{proof}

Motivated by Corollary \ref{EichlerUniqueness}, we define an equivalence relation on the set of lattices in $V$ by

	\begin{align*}
	\Lambda_1 \sim \Lambda_2 \Leftrightarrow \Lambda_1 = \lambda\Lambda_2 \text{ or } \Lambda_1 = \lambda \Lambda_2^\sharp \ \left(\text{for some } \lambda \in F^\times\right).
	\end{align*}
	
\noindent From Corollary \ref{EichlerUniqueness}, it follows that the set of equivalence classes of lattices is in bijection with the set of $\ddagger$-orders of $End(V)$ that are Eichler orders. We can give an even stronger statement, although we need to split into two cases

\begin{theorem}\label{NormalCount}
Let $V$ be a 2-dimensional vector space over a local field $F$ with maximal ideal $\mathfrak{p}$. Suppose either $2 \notin \mathfrak{p}$ or $\mathfrak{d}(-disc(\ddagger) \cap \mathfrak{o}) = \mathfrak{p}$. Then there is a well-defined bijection

	\begin{align*}
	\varphi: \left\{\text{maximal lattices in } V\right\} / \sim &\rightarrow \left\{\text{maximal } \ddagger\text{-orders of } End(V)\right\} \\
	[\Lambda] &\mapsto End(\Lambda) \cap End(\Lambda^\sharp).
	\end{align*}
\end{theorem}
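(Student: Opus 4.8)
The plan is to show that $\varphi$ is well-defined, surjective, and injective, leaning on the structural results already established. For \emph{well-definedness}: I must check that if $\Lambda$ is a maximal lattice then $\Lambda^\sharp$ is also maximal, so that $\varphi$ does not depend on the representative of the $\sim$-class, and that the output is genuinely a maximal $\ddagger$-order. The first point should follow from the explicit local description of maximal lattices: under the hypothesis ($2\notin\mathfrak{p}$ or $\mathfrak{d}(-disc(\ddagger)\cap\mathfrak{o})=\mathfrak{p}$), Theorems \ref{NonDyadic} and \ref{DyadicDMaximal} tell us $\mathfrak{o}^2$ is an $\mathfrak{o}$-maximal lattice for $q_\ddagger = \ddagger_\lambda$ with $\lambda\mathfrak{o}\in\{\mathfrak{o},\mathfrak{p}\}$, and by Lemma \ref{TechnicalLemma} (applied after diagonalizing, which is legitimate here since any maximal lattice has an orthogonal base in these cases) every maximal lattice is $GO$-conjugate to a scaling of $\mathfrak{o}^2$; since $(\mathfrak{o}^2)^\sharp$ differs from $\mathfrak{o}^2$ by the unimodular-or-$\mathfrak{p}$-modular twist, it too is $\mathfrak{a}$-maximal for the appropriate $\mathfrak{a}$, and maximality is preserved by $GO$-conjugation and scaling (Lemma \ref{MaximalOrdersPreserved}). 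That $End(\Lambda)\cap End(\Lambda^\sharp)$ is then a maximal $\ddagger$-order is exactly the content of Theorems \ref{NonDyadic}/\ref{DyadicDMaximal} once one knows it equals $Mat(2,\mathfrak{o})\cap Mat(2,\mathfrak{o})^\ddagger$ up to isomorphism and has the correct discriminant $disc(H)\cap\iota(disc(\ddagger))$.

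For \emph{surjectivity}: let $\OO$ be any maximal $\ddagger$-order. By Theorem \ref{NonDyadic} (non-dyadic case) or Theorem \ref{DyadicDMaximal} (dyadic case with $\mathfrak{d}(-disc(\ddagger)\cap\mathfrak{o})=\mathfrak{p}$), there is a fractional ideal $\mathfrak{a}$ and an $\mathfrak{a}$-maximal lattice $\Lambda$ with $\OO = End(\Lambda)\cap End(\Lambda^\sharp) = \varphi([\Lambda])$. So surjectivity is immediate from those theorems; no new work is needed.

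For \emph{injectivity}: suppose $\varphi([\Lambda_1]) = \varphi([\Lambda_2])$, i.e. $End(\Lambda_1)\cap End(\Lambda_1^\sharp) = End(\Lambda_2)\cap End(\Lambda_2^\sharp)$ with both $\Lambda_i$ maximal. By Corollary \ref{EichlerUniqueness}, this forces $\Lambda_1 = \mu\Lambda_2$ or $\Lambda_1 = \mu\Lambda_2^\sharp$ for some $\mu\in F^\times$, which is precisely $\Lambda_1\sim\Lambda_2$, so $[\Lambda_1]=[\Lambda_2]$. Thus injectivity is a direct consequence of Corollary \ref{EichlerUniqueness}.

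The main obstacle is the well-definedness step, and specifically verifying that $\Lambda^\sharp$ is maximal whenever $\Lambda$ is — a priori the dual of a maximal lattice need not be maximal. The cleanest route is the reduction to the model case $(\,Mat(2,F),\ddagger_\lambda\,)$ via Lemma \ref{ExplicitAlgebra}: there $\mathfrak{o}^2$ is $\mathfrak{o}$-maximal with $(\mathfrak{o}^2)^\sharp = \mathfrak{o}\oplus\lambda^{-1}\mathfrak{o}$ (reading off from the Gram matrix $\mathrm{diag}(\lambda,1)$), which equals $\mathfrak{o}^2$ up to scaling when $\lambda\mathfrak{o}=\mathfrak{o}$ and is $\mathfrak{p}^{-1}$-modular-twisted of a maximal lattice when $\lambda\mathfrak{o}=\mathfrak{p}$; in both subcases one checks directly that $\mathfrak{n}\big((\mathfrak{o}^2)^\sharp\big)$ and the volume satisfy the square-free criterion of \cite{QuadraticForms} used in Theorem \ref{NonDyadic}, so $(\mathfrak{o}^2)^\sharp$ is maximal for the appropriate ideal. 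Transporting along a $GO$-conjugation $g$ and a scaling (which is how an arbitrary maximal $\Lambda$ relates to $\mathfrak{o}^2$, by Lemmas \ref{MaximalOrdersPreserved} and \ref{TechnicalLemma}) and using that $(g\Lambda)^\sharp = g\Lambda^\sharp$ for $g\in GO$ then yields maximality of $\Lambda^\sharp$ in general, completing the argument.
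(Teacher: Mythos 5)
Your overall skeleton matches the paper's: reduce to $(Mat(2,F),\ddagger_\lambda)$, get injectivity from Corollary \ref{EichlerUniqueness}, get surjectivity from Theorems \ref{NonDyadic} and \ref{DyadicDMaximal}. Those two parts are correct and are exactly what the paper does. (A minor point: the issue you single out as the ``main obstacle''---whether $\Lambda^\sharp$ is maximal when $\Lambda$ is---is not actually needed for independence of the representative. If $\Lambda_1=\mu\Lambda_2^\sharp$ then $End(\Lambda_1)\cap End(\Lambda_1^\sharp)=End(\Lambda_2^\sharp)\cap End(\Lambda_2)$ because $\Lambda^{\sharp\sharp}=\Lambda$, regardless of any maximality of $\Lambda_2^\sharp$. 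The only genuine well-definedness question is whether the value lands in the codomain, i.e.\ whether $End(\Lambda)\cap End(\Lambda^\sharp)$ is a \emph{maximal} $\ddagger$-order for every maximal $\Lambda$.)

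That codomain check is where your argument has a real gap. You assert that ``every maximal lattice is $GO$-conjugate to a scaling of $\mathfrak{o}^2$,'' citing Lemma \ref{TechnicalLemma}, but that lemma only produces a containment $End(\Lambda)\cap End(\Lambda^\sharp)\subset g\left(Mat(2,\mathfrak{o})\cap Mat(2,\mathfrak{o})^\ddagger\right)g^{-1}$; its explicit computation shows the containment is an equality only when the integer $n$ there vanishes, and nothing in your write-up shows that maximality of $\Lambda$ forces this. Up to scaling one must treat $\mathfrak{o}$-maximal and $\mathfrak{p}$-maximal lattices separately. For $\mathfrak{o}$-maximal lattices, isometry with $\mathfrak{o}^2$ suffices. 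For $\mathfrak{p}$-maximal lattices one needs a similitude $g\in GO$ with $ord_\mathfrak{p}(\det g)$ odd, so that $g\mathfrak{o}^2$ is $\mathfrak{p}$-maximal; such a $g$ exists when $q_\ddagger$ is isotropic or represents an element of $\mathfrak{p}\setminus\mathfrak{p}^2$, but in the remaining anisotropic case no such $g$ exists and one must instead observe that the unique $\mathfrak{p}$-maximal lattice $\Lambda_\mathfrak{p}=\left\{v : q_\ddagger(v)\in\mathfrak{p}\right\}$ is actually $\mathfrak{p}^2$-maximal and equals $\pi\mathfrak{o}^2$, hence is a scaling of $\mathfrak{o}^2$ after all. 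This case analysis is the substantive content of the paper's well-definedness argument and is missing from your proposal; without it you have only shown that $\varphi([\Lambda])$ is \emph{contained in} a maximal $\ddagger$-order.
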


\begin{proof}
As usual, we use Lemma \ref{ExplicitAlgebra} to reduce to the case where $A = Mat(2,F)$, $\ddagger = \ddagger_\lambda$.

To show $\varphi$ is well-defined, we need to show that if $\Lambda$ is a maximal lattice, then $\varphi([\Lambda])$ is a maximal $\ddagger$-order. We can freely assume that $\Lambda$ is either $\mathfrak{o}$-maximal or $\mathfrak{p}$-maximal. Over a local field, all $\mathfrak{a}$-maximal lattices are isomorphic---since $\mathfrak{o}^2$ is maximal and $\varphi([\mathfrak{o}^2])$ is a maximal $\ddagger$-order, we can conclude that any $\mathfrak{o}$-maximal lattice corresponds to a maximal $\ddagger$-order. So, it remains to consider $\mathfrak{p}$-maximal orders.

If $(F^2,q_\ddagger)$ is isotropic, there is a $g \in GO\left(Mat(2,F), \ddagger\right)$ such that $\det(g) = \pi$, and therefore $g\mathfrak{o}^2$ is a $\mathfrak{p}$-maximal lattice corresponding to a maximal $\ddagger$-order. Ergo all $\mathfrak{p}$-maximal orders correspond to maximal $\ddagger$-orders. If $(F^2,q_\ddagger)$ is anisotropic, there is a unique $\mathfrak{p}$-maximal lattice, given by

	\begin{align*}
	\Lambda_\mathfrak{p} &= \left\{v \in F^2 \middle| q_\ddagger(v) \in \mathfrak{p}\right\}.
	\end{align*}
	
\noindent There are two cases: either there exists $v \in F^2$ such that $q_\ddagger(v) \in \mathfrak{p} \backslash \mathfrak{p}^2$, or there doesn't. If there is such a $v$, then there is a $g \in GO\left(Mat(2,F), \ddagger\right)$ such that $\det(g) = q_\ddagger(v)$, and therefore $g\mathfrak{o}^2$ is a $\mathfrak{p}$-maximal lattice---the result of the argument is the same as in the isotropic case. If there is no such $v$, then $\Lambda_\mathfrak{p}$ is $\mathfrak{p}^2$-maximal, hence $\frac{1}{\pi}\Lambda_\mathfrak{p} = \mathfrak{o}^2$, and so we conclude that $\varphi([\Lambda_\mathfrak{p}])$ is a maximal $\ddagger$-order.

That $\varphi$ is injective follows from Corollary \ref{EichlerUniqueness}. Surjectivity follows from Theorems \ref{NonDyadic} and \ref{DyadicDMaximal}.
\end{proof}

\begin{theorem}\label{StrangeCount}
Let $V$ be a 2-dimensional vector space over a local field $F$ over a dyadic place $\mathfrak{p}$ and $\mathfrak{d}(-disc(\ddagger) \cap \mathfrak{o}) \neq \mathfrak{p}$. Then there is a well-defined bijection

	\begin{align*}
	\varphi: \left\{\text{modular lattices in } V\right\} / \sim &\rightarrow \left\{\text{maximal } \ddagger\text{-orders of } End(V)\right\} \\
	[\Lambda] &\mapsto End(\Lambda) \cap End(\Lambda^\sharp) = End(\Lambda).
	\end{align*}
	
\noindent Furthermore, two maximal $\ddagger$-orders $\varphi([\Lambda_1]), \varphi([\Lambda_2])$ are isomorphic if and only if there are representatives $\Lambda_1 \in [\Lambda_1]$, $\Lambda_2 \in [\Lambda_2]$ and an element $g \in GO\left(End(V), \ddagger\right)$ such that $\Lambda_1 = g\Lambda_2$.

Finally, the number of isomorphism classes of maximal $\ddagger$-orders is

\begin{align*}
\begin{cases} m + 1 & \text{if } \mathfrak{d}(-\lambda) = \mathfrak{p}^{2m + 1} \\ n + 1 & \text{if } \mathfrak{d}(-\lambda) = (4) \text{ or } 0 \end{cases},
\end{align*}

\noindent where $n = ord_\mathfrak{p}(2)$.
\end{theorem}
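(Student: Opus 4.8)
The plan is to establish the three assertions of Theorem \ref{StrangeCount} in turn: the bijection $\varphi$, the isomorphism criterion, and the final count. For the bijection, by Lemma \ref{ExplicitAlgebra} we reduce as usual to $H = Mat(2,F)$, $\ddagger = \ddagger_\lambda$ with $\lambda\mathfrak{o} = \mathfrak{o}$ (since Theorem \ref{DyadicDNotMaximal} already forces $\iota(disc(\ddagger)) = \mathfrak{o}$ in this case, so $\mathfrak{o}^2$ is unimodular). Well-definedness of $\varphi$ means: every $\mathfrak{a}$-modular lattice $\Lambda$ gives a maximal $\ddagger$-order, and this order equals $End(\Lambda)$ (not a proper Eichler order). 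The equality $End(\Lambda)\cap End(\Lambda^\sharp) = End(\Lambda)$ is immediate from $\mathfrak{a}$-modularity ($\Lambda = \mathfrak{a}\Lambda^\sharp$ so $End(\Lambda^\sharp)=End(\Lambda)$). That it is a \emph{maximal} $\ddagger$-order: any $\mathfrak{a}$-modular lattice is, after scaling, either unimodular or $\mathfrak{p}$-modular, and over a local field all unimodular (resp. all $\mathfrak{p}$-modular) lattices in a fixed $2$-dimensional quadratic space are isometric; since $\mathfrak{o}^2$ is unimodular and $\varphi([\mathfrak{o}^2]) = Mat(2,\mathfrak{o})$ is maximal among $\ddagger$-orders, Lemma \ref{MaximalOrdersPreserved} transports maximality to every unimodular $\Lambda$, and a separate check (or the same argument applied to a fixed $\mathfrak{p}$-modular lattice, shown maximal because Theorem \ref{DyadicDNotMaximal} says \emph{every} maximal $\ddagger$-order is $End$ of a modular lattice) handles the $\mathfrak{p}$-modular case. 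Injectivity of $\varphi$ is exactly Corollary \ref{EichlerUniqueness} restricted to the classes $\sim$; surjectivity is Theorem \ref{DyadicDNotMaximal}.

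For the isomorphism criterion: two maximal $\ddagger$-orders are isomorphic precisely when some $\phi \in PGO(End(V),\ddagger)$ carries one to the other. Lifting $\phi$ to $g \in GO(End(V),\ddagger)$ and using $g\,End(\Lambda)\,g^{-1} = End(g\Lambda)$ together with $End(\Lambda)^\ddagger = End(\Lambda^\sharp)$ (Lemma \ref{EichlerSurjection}) and $(g\Lambda)^\sharp$ computed via $g \in GO$, one gets $\phi(\varphi([\Lambda_2])) = \varphi([g\Lambda_2])$. Combining with Corollary \ref{EichlerUniqueness}, $\varphi([\Lambda_1])$ and $\varphi([\Lambda_2])$ are isomorphic iff $\Lambda_1 \sim g\Lambda_2$ for some $g \in GO$, which (absorbing the scaling and the possible passage to the dual, both of which are realized by elements of $GO$ when $disc(\ddagger)$ is a square times a unit — one must check the dual is in the same $GO$-orbit, or incorporate it into the statement) is the stated condition $\Lambda_1 = g\Lambda_2$ for suitable representatives.

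For the count, the task reduces to enumerating the $GO(End(V),\ddagger_\lambda)$-orbits on modular lattices in $(F^2, x^2 + \lambda y^2)$. By Lemma \ref{DescriptionOfIsomorphisms}, $GO$ consists of the similitudes of $q_\ddagger$ together with the ``anti-similitudes'' (multiplier $-\det g$); since every modular lattice is unimodular or $\mathfrak{p}$-modular up to scaling, and $GO$ contains the similitude group, counting orbits amounts to: how many isometry classes of unimodular binary lattices, plus how many of $\mathfrak{p}$-modular binary lattices, are there in this quadratic space, with two classes identified if they differ by a similitude of non-square norm or by the duality $\Lambda \mapsto \Lambda^\sharp$. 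This is a computation with binary quadratic lattices over a dyadic local ring: the classification depends on the invariant $\mathfrak{d}(-\lambda)$ (the relative different / quadratic defect of $F(\sqrt{-\lambda})/F$), and the answer should fall out of the standard dyadic classification of binary forms — when $\mathfrak{d}(-\lambda) = \mathfrak{p}^{2m+1}$ the ramified quadratic extension $F(\sqrt{-\lambda})$ has $m$ governing the number of ``intermediate'' modular classes, giving $m+1$; when $\mathfrak{d}(-\lambda) = (4)$ the extension is unramified (or $q_\ddagger$ is anisotropic of that type) and the relevant parameter is $n = ord_\mathfrak{p}(2)$, giving $n+1$; and $\mathfrak{d}(-\lambda) = 0$ is the isotropic/split case, which by the hyperbolic-plane structure also yields $n+1$.

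\textbf{Main obstacle.} The serious work is the final count: it requires the explicit dyadic classification of binary quadratic $\mathfrak{o}$-lattices (in the spirit of O'Meara, \emph{Introduction to Quadratic Forms}, \S93) and a careful bookkeeping of which isometry classes become \emph{similar} (hence give the same maximal order) and which are further merged by the extra $GO$-element swapping a lattice with its dual. Pinning down precisely why the count is $m+1$ versus $n+1$ — i.e. relating the chain of modular lattices between $\mathfrak{o}^2$ and its rescalings to the filtration of the norm group of $F(\sqrt{-\lambda})^\times$ — is where all the delicacy lies; everything else is formal consequence of the lemmas already proved.
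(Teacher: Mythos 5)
Your reduction, your treatment of injectivity/surjectivity via Corollary \ref{EichlerUniqueness} and Theorem \ref{DyadicDNotMaximal}, and your account of the isomorphism criterion all match the paper's route. But there are two genuine problems. First, your well-definedness argument leans on the claim that ``over a local field all unimodular (resp.\ all $\mathfrak{p}$-modular) lattices in a fixed $2$-dimensional quadratic space are isometric.'' That is false precisely in the dyadic case you are in: unimodular lattices on the same dyadic quadratic space are classified by their norm groups, and the existence of several non-isometric ones is exactly what produces the $m+1$ and $n+1$ classes in the final count. Fortunately the claim is not needed: once $\Lambda = \mathfrak{a}\Lambda^\sharp$ gives $End(\Lambda)\cap End(\Lambda^\sharp) = End(\Lambda)$, you are looking at a maximal order of $End(V)$ that happens to be a $\ddagger$-order, and any such is trivially a maximal $\ddagger$-order. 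That one-line observation is the paper's argument; you should replace your isometry-class detour with it.

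Second, and more seriously, the final count is not proved — you describe what would need to be done (``the answer should fall out of the standard dyadic classification'') and then explicitly flag that pinning down $m+1$ versus $n+1$ ``is where all the delicacy lies.'' That delicacy is the content of the claim. The paper's proof proceeds by: (i) noting that $GO(End(V),\ddagger)$ contains an element of determinant in $\mathfrak{p}\setminus\mathfrak{p}^2$, so every class has a unimodular representative; (ii) invoking that two unimodular dyadic lattices are isometric iff their norm groups $\mathfrak{g}\Lambda = a\,\mathfrak{o}^{sq} + b\,\mathfrak{o}$ agree, so the $GO$-orbit is determined by the pair $(ord_\mathfrak{p}(a), ord_\mathfrak{p}(b))$ of orders of a norm generator and a weight generator; and (iii) enumerating, from a Gram matrix $\left(\begin{smallmatrix}\alpha & 1\\ 1 & \beta\end{smallmatrix}\right)$ with $-1+\alpha\beta = -\lambda$, the achievable pairs in each of the three cases $\lambda = -1+\pi^{2m+1}\epsilon$, $\lambda = -1+4\epsilon$, $\lambda = -1$, using the constraint $ord_\mathfrak{p}(\alpha)+ord_\mathfrak{p}(\beta) = ord_\mathfrak{p}(1-\lambda^{-1}\cdot 0 + \lambda+1)$ (i.e.\ $2m+1$, $2n$, or $\infty$) and the rule that the weight generator is $\beta$ or $2$ according as $ord_\mathfrak{p}(\beta) \le n$ or not. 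Without carrying out this bookkeeping (or an equivalent one), the values $m+1$ and $n+1$ are asserted, not derived, so the proposal as written does not establish the theorem.
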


\begin{proof}
If $\Lambda$ is $\mathfrak{a}$-modular, then $End(\Lambda^\sharp) = End(\mathfrak{a}\Lambda) = End(\Lambda)$, hence $End(\Lambda)$ is a maximal $\ddagger$-order. Therefore, $\varphi$ is well-defined.

That $\varphi$ is injective follows from Corollary \ref{EichlerUniqueness}. Surjectivity follows from Theorem \ref{DyadicDNotMaximal}. The fact that isomorphism of maximal $\ddagger$-orders corresponds to acting on the lattices by $GO\left(End(V), \ddagger\right)$ is an easy consequence of bijectivity and Lemma \ref{DescriptionOfIsomorphisms}.

It remains to determine the number of non-isomorphic classes of maximal $\ddagger$-orders. To do this, we make use of the norm group. For a lattice $\Lambda$, its norm group is defined as

	\begin{align*}
	\mathfrak{g}\Lambda = q_\ddagger(\Lambda) + 2 \mathfrak{s}\Lambda,
	\end{align*}
	
\noindent or equivalently as

	\begin{align*}
	\mathfrak{g}\Lambda = a \mathfrak{o}^{sq} + b \mathfrak{o},
	\end{align*}
	
\noindent where $a$ is a norm generator, $b$ is a weight generator, and

	\begin{align*}
	\mathfrak{o}^{sq} = \left\{\mathfrak{o}^2\middle| d \in \mathfrak{o}\right\}.
	\end{align*}
	
\noindent Now, notice that since $GO\left(End(V), \ddagger\right)$ contains an element with determinant in $\mathfrak{p}\backslash \mathfrak{p}^2$, each isomorphism class contains at least one unimodular representative. Two unimodular lattices over a dyadic field are isomorphic if and only the norm groups are isomorphic. Therefore, two unimodular lattices $\Lambda_1, \Lambda_2$ correspond to the same isomorphism class if and only if there is an element $\gamma\in GO(H,\ddagger)$ such that

	\begin{align*}
	\det(\gamma) &\in \mathfrak{o}^\times \\
	\mathfrak{g}\Lambda_1 &= \gamma^\ddagger \gamma \mathfrak{g}\Lambda_2.
	\end{align*}
	
\noindent Let the norm generators of $\Lambda_1,\Lambda_2$ be $a_1, a_2$ respectively, and let the weight generators be $b_1,b_2$. Then $\Lambda_1,\Lambda_2$ are in the same isomorphism class if and only if there exist $s,t \in F$ such that $s^2 + t^2 \lambda \in \mathfrak{o}^\times$ and

	\begin{align*}
	a_1 \mathfrak{o}^{sq} + b_1 \mathfrak{o} &= \pm(s^2 + t^2\lambda) a_2 \mathfrak{o}^{sq} + b_2 \mathfrak{o}.
	\end{align*}
	
\noindent However, over a dyadic field, any unimodular lattice has a basis in which the Gram matrix is

	\begin{align*}
	\begin{pmatrix} \alpha & 1 \\ 1 & \beta \end{pmatrix},
	\end{align*}

\noindent where $\alpha, \beta \in \mathfrak{o}$, $-1 + \alpha\beta \in \mathfrak{o}^\times$, and $\alpha$ is a norm generator (in particular, $ord_\mathfrak{p}(\alpha) < ord_\mathfrak{p}(\beta)$). This implies that there exist $s_1, s_2, t_1, t_2 \in F$ such that

	\begin{align*}
	a_1 &= s_1^2 + t_1^2 \lambda \\
	a_2 &= s_2^2 + t_2^2 \lambda,
	\end{align*}
	
\noindent and therefore there exists $s,t$ such that $s^2 + t^2\lambda \in \mathfrak{o}^\times$ and $a_1 = (s^2 + t^2\lambda)a_2$ if and only if 

	\begin{align*}
	ord_\mathfrak{p}(a_1) = ord_\mathfrak{p}(a_2).
	\end{align*}
	
\noindent In other words, we see that the isomorphism class of $\Lambda_1, \Lambda_2$ is wholly determined by the order of their norm and weight generators.

However, choosing a basis as above such that the Gram matrix is

	\begin{align*}
	\begin{pmatrix} \alpha & 1 \\ 1 & \beta \end{pmatrix},
	\end{align*}
	
\noindent we get that the norm generator is $\alpha$ and the weight generator is either $\beta$ or $2$ (if $ord_\mathfrak{p}(\beta) > n$).

There exists a unimodular lattice $\Lambda$ with Gram matrix

	\begin{align*}
	\begin{pmatrix} \alpha & 1 \\ 1 & \beta \end{pmatrix}
	\end{align*}
	
\noindent if and only if

	\begin{align}\label{DefiningEquation}
	\det\begin{pmatrix} -\lambda & 0 \\ 0 & 1 \end{pmatrix} &= -\lambda = -1 + \alpha\beta \in \mathfrak{o}^\times/\left(\mathfrak{o}^\times\right)^2.
	\end{align}
	
\noindent Note that $\lambda$ is only defined up to multiplication by $\left(\mathfrak{o}^\times\right)^2$, so we can take it to be $-1 + \pi^{2m + 1}\epsilon$, $-1 + 4\epsilon$, or $-1$ (for some $\epsilon \in \mathfrak{o}^\times$). Furthermore, since we can scale $\Lambda$ by elements of $\mathfrak{o}^\times$, we can assume that we have equality in $\mathfrak{o}$ itself in equation \ref{DefiningEquation}.

First, suppose that $\lambda = -1 + \pi^{2m + 1}\epsilon$. We have that

	\begin{align*}
	-1 + \pi^{2m + 1}\epsilon &= -1 + \alpha\beta,
	\end{align*}
	
\noindent showing that

	\begin{align*}
	ord_\mathfrak{p}(\alpha) + ord_\mathfrak{p}(\beta) = 2m + 1 < 2n.
	\end{align*}
	
\noindent Thus, $\beta$ is the weight generator if $ord_\mathfrak{p}(\alpha) \geq 2m - n + 1$. Otherwise, $2$ is the weight generator. Therefore, all possible choices for the orders of the norm and weight generators are

	\begin{align*}
	\begin{cases} (m,m + 1),(m - 1, m + 2), \ldots (2m - n + 1, n), (2m - n, n), (2m - n - 1, n), \ldots (0,n) & \text{if } 2m + 1 > n \\ (m, m + 1),(m - 1, m + 2), \ldots (0, 2m + 1) & \text{if } 2m + 1 \leq n \end{cases}.
	\end{align*}

\noindent Second, suppose that $\lambda = -1 + 4\epsilon$. We have that

	\begin{align*}
	ord_\mathfrak{p}(\alpha) + ord_\mathfrak{p}(\beta) = 2n,
	\end{align*}
	
\noindent hence $\beta$ is the weight generator if $ord_\mathfrak{p}(\alpha) \geq n$, and otherwise $2$ is the weight generator. Therefore, all possible choices for the orders of the norm and weight generators are

	\begin{align*}
	(n,n), (n - 1, n), (n - 2, n), \ldots (0,n).
	\end{align*}
	
\noindent Finally, suppose that $\lambda = -1$. We have that $\alpha\beta = 0$. Since $\alpha$ is taken to be norm generator, it is non-zero---therefore, $\beta = 0$, and therefore the weight generator is simply $2$. The norm generator cannot have order greater than $n$, and therefore we see that all the possible choices for the orders of the norm and weight generators are again

	\begin{align*}
	(n,n), (n - 1, n), (n - 2, n), \ldots (0,n).
	\end{align*}
	
\noindent This proves the claim.
\end{proof}

\hskip0.1in

\bibliography{Orders}
\bibliographystyle{alpha}

\hskip0.1in

\address{Department of Mathematics, Yale University, 10 Hillhouse Avenue, New Haven, CT 06511}

\email{arseniy.sheydvasser@yale.edu}
\end{document}